\subjclass[2020]{46L07, 81R15}
\keywords{Operator systems, operator spaces, completely bounded norm,  positive maps}
\newtheorem{thm}{Theorem}[section]
\newtheorem{cor}[thm]{Corollary}
\newtheorem{lemma}[thm]{Lemma}
\newtheorem{prop}[thm]{Proposition}
\theoremstyle{definition}
\newtheorem{remark}[thm]{Remark}
\newtheorem{example}[thm]{Example}
\newcommand{\bb}[1]{\mathbb{#1}}
\newcommand{\cl}[1]{\mathcal{#1}}
\newcommand{\id}{{\operatorname{\textsl{id}\mskip2mu}}}
\newcommand{\OMIN}{{\operatorname{OMIN}}}
\newcommand{\OMAX}{{\operatorname{OMAX}}}
\newcommand{\e}{\varepsilon}
\newcommand{\st}{\ : \ }
\DeclareMathOperator{\tr}{Tr}
\renewcommand{\leq}{\leqslant}
\renewcommand{\geq}{\geqslant}
\newcommand{\M}{\mathsf{M}}
\newcommand{\R}{\mathbf{R}}
\newcommand{\C}{\mathbf{C}}
\newcommand{\N}{\mathbb{N}}
\DeclareMathOperator{\vrad}{\mathrm{vrad}}
\DeclareMathOperator{\E}{\mathbf{E}}
\DeclareMathOperator{\mathspan}{\mathrm{span}}
\DeclareMathOperator{\Ad}{Ad}
\DeclareMathOperator{\CP}{CP}
\DeclareMathOperator{\UCP}{UCP}
\newcommand{\ketbra}[2]{| #1 \rangle \langle  #2 |}
\newcommand{\UkP}{\ifmmode {\textnormal{U}k\textnormal{P}} \else {U$k$P}\fi}
\newcommand{\kmin}{\mathop{\mbox{-}\mathrm{min}}}
\newcommand{\kmax}{\mathop{\mbox{-}\mathrm{max}}}
\begin{document}

\title[CB norms of $k$-positive Maps]{Completely Bounded Norms of $k$-positive Maps}
\author[Aubrun]{Guillaume Aubrun}
\address{Université Lyon 1, CNRS, INRIA, Institut Camille Jordan, 43, bd du 11 novembre 1918, 69100 VILLEURBANNE, France}
\email{aubrun@math.univ-lyon1.fr}
\author[Davidson]{Kenneth R. Davidson}
\address{Department of Pure Mathematics, University of Waterloo, Waterloo, ON, Canada  N2L 3G1}
\email{krdavids@uwaterloo.ca}
\author[M\"uller-Hermes]{Alexander M\"uller-Hermes}
\address{Department of Mathematics, University of Oslo, P.O. box 1053, Blindern,
0316 Oslo, Norway}
\email{muellerh@math.uio.no}
\author[Paulsen]{Vern I.~Paulsen}
\address{Institute for Quantum Computing and Department of Pure Mathematics, University of Waterloo, Waterloo, ON, Canada  N2L 3G1}
\email{vpaulsen@uwaterloo.ca}
\author[Rahaman]{Mizanur Rahaman}
\address{UNIV LYON, INRIA, ENS LYON, UCBL, LIP, F-69342, LYON CEDEX 07, FRANCE}
\email{mizanur.rahaman@ens-lyon.fr}

\begin{abstract}
Given an operator system $\cl S$, we define the parameters $r_k(\cl S)$ (resp.\ $d_k(\cl S)$) defined as the maximal value of the completely bounded norm of a unital $k$-positive map from an arbitrary operator system into $\cl S$ (resp.\ from $\cl S$ into an arbitrary operator system). 

In the case of the matrix algebras $\M_n$, for $1 \leq k \leq n$, we compute the exact value $r_k(\M_n) = \frac{2n-k}{k}$ and show upper and lower bounds on the parameters $d_k(\M_n)$.

Moreover, when $\cl S$ is a finite-dimensional operator system, adapting results of Passer and the 4th author \cite{PaPa}, we show that the sequence $(r_k(\cl S))$ tends to $1$ if and only if $\cl S$ is exact and that the sequence $(d_k(\cl S))$ tends to $1$ if and only if $\cl S$ has the lifting property. 
\end{abstract}

%

\maketitle

\section{Introduction}

Completely positive maps and positive maps play central roles in both operator algebras and quantum information theory. Completely positive maps have a nice representation theory~\cite{Choi75, Choi1972} that does not exist for positive maps.

Let $\cl B(H)$ be the space of continuous linear operators on a Hilbert space $H$.
This is equipped with a norm and a cone of positive (semidefinite) operators.
The algebra $M_n(\cl B(H)):= M_n \otimes B(\cl H)$ of $n\times n$ matrices with coefficients in $\cl B(H)$ can be identified with the space of continuous operators on the direct sum of $n$ copies of $H$.
This provides a norm and a positive cone on matrices over $\cl B(H)$. These are called the {\it matrix norm} and {\it matrix cone} over $B(\cl H)$.

An \textit{operator system} $\cl S$ is any closed self-adjoint subspace of $\cl B(H)$ containing the identity.
An operator system inherits these norms and orders on the matrix spaces $M_n(\cl S)$ via the inclusions $M_k(\cl S) \subseteq M_k(B(\cl H))$.

A linear map $\phi$ from an operator system $\cl S$ to another operator system $\cl T$ is called $k$-positive if the induced map $\id_k \otimes \phi$ of $M_k(\cl S)$ into $M_k(\cl T)$ is positive.
The map is \textit{completely positive} if it is $k$-positive for all $k\ge1$. 
The completely bounded norm of a map between operator systems is given by $\|\phi\|_{cb} = \sup_{k\ge1} \| \id_k \otimes \phi \|$. 
If $\phi$ is completely positive,  then $\| \phi \|_{cb} = \| \phi(I) \|$, where $I$ is the identity. But if $\phi$ is only $k$-positive, then much less is known about $\| \phi \|_{cb},$ even in the case that either the domain or range is a matrix algebra.  


Given two operator systems $\cl S$, $\cl T,$ it is well-known that a unital linear map $\phi: \cl T \to \cl S$ is completely positive if and only if it is {\it completely contractive}, i.e., $\| \phi \|_{cb} \le 1$. Thus, if a unital map is $k$-positive, but not completely positive, then its completely bounded norm must be larger than~$1$. This makes it natural to wonder how large? We are also interested in how the supremum of the completely bounded norms of such maps behaves as $k$ tends to infinity. 
These natural questions have not been addressed even in the case when the underlying operator systems are the matrix algebras $\M_n$. 

Throughout the paper, we use {\UkP} to denote ``unital and $k$-positive''.
In order to state our results, it is convenient to introduce the following parameters associated to an operator system $\cl S$
\[
 r_k(\cl S) := \sup \{ \| \phi \|_{cb} \st \phi: \cl T \to \cl S \,\,\, \UkP \},
\]
 and
 \[
  d_k(\cl S) := \sup \{ \| \phi \|_{cb} \st \phi: \cl S \to \cl T \,\,\, \UkP \},
 \]
 where the supremum ranges over all operator systems $\cl T$ and all {\UkP} maps $\phi$. It is helpful to think as $r$ and $d$ as standing for range and domain. Each sequence $(r_k(\cl S))_k$ and $(d_k(\cl S))_k$ is nonincreasing.
 
In the case that $\cl S$ is the algebra $\M_n$ of $n \times n$ matrices, off-the-shelf information on the parameters $r_k(\M_n)$ reads as
\begin{equation} 
\label{eq:off-the-shelf-domain}
1 = r_n(\M_n) \leq \cdots \leq r_2(\M_n) \leq r_1(\M_n) \leq 2n 
\end{equation}

The equality $r_n(\M_n)=d_n(\M_n)=1$ follows from the fact that a map with domain or range $\M_n$ which is $n$-positive is actually completely positive \cite{Pa2002}*{Theorem 3.14 and Theorem 6.1}. 
The inequality $r_1(\M_n) \leq 2n$ follows from fact that any map $\phi : \cl{T} \to \M_n$ satisfies
\begin{equation} \label{eq:trivial-bound}
 \| \phi \|_{cb}= \| \id_n \otimes \phi: \M_n \otimes \cl T \to \M_n \otimes \M_n \| \leq n \| \phi \|,
\end{equation}
and, if $\phi$ is positive, $\|\phi\| \leq 2 \|\phi(1)\|$. 
Moreover, an early example of Arveson~\cite{arveson69} shows that the constant $2$ is optimal in this inequality.

Our first result is that the rightmost inequality in~(\ref{eq:off-the-shelf-domain}) is not sharp and that the correct value is $r_1(\M_n) = 2n-1$. 
More generally, for any $1 \leq k \leq n$ we show in Theorem \ref{OMINthm} that
\[
 r_k(\M_n) = \frac{2n-k}{k} .
\]
It is essential that we allow for a general operator system in the domain when defining~$r_k(\cl S)$. 
If instead we restrict the domain to be a C*-algebra and take $k=1$, then
\[
 \sup \{ \| \phi \|_{cb} \st \phi: \cl A \to \cl \M_n \text{ unital, positive, }\ \cl A \text{ a C*-algebra} \} = n .
\]
The upper bound follows from \eqref{eq:trivial-bound} combined with the Russo--Dye theorem, 
and is attained by the transpose map $\phi(X)=X^T$ on~$\M_n$. We point out in passing that the transpose is essentially the unique map with this property, see \cite{puzzuoli18}.

The study of the parameter $d_k(\M_n)$ seems to be more intricate. 
In this case, it is easy to see that it suffices to take the range to be $\cl B(H)$ for some separable Hilbert space $H$;
and the supremum is attained. It is also the supremum obtained using range $\M_m$ for arbitrary $m\geq 1$. 

For $k=1$, we show in Theorem \ref{thm:d_1ofM_n} that
\[
d_1(\M_n) = n, 
\]
the equality being achieved by the transpose map. For $k \geq 2$, we show the upper bound
\[
 d_k(\M_n) \leq r_k(\M_n) = \frac{2n-k}{k}.
\]
Essentially no good lower bounds are obtainable from the literature. Generalizing an earlier example by Choi~\cite{Choi1972}, Tomiyama shows in \cite{Tomiyama} that the map $\tau:\M_n \to \M_n$ given by
\begin{equation}\label{equ:TomiyamaEx}
 \tau_{n,k}(X) = \left(1+ \frac{1}{nk-1} \right) \frac{\tr(X)}{n} I_n - \frac{1}{nk-1} X,
\end{equation}
is unital and $k$-positive. Computing the cb norm of this map yields the lower bound 
\[
 d_k(\M_n)\geq \|\tau_{n,k}\|_{cb} = 1 + \frac{2(n-k)}{n(nk-1)}.
\]
A fairly thorough search of the literature \cites{Choi1972, Choi1980, Kye, PaYo, TaTo, Tomiyama} reveals no examples of unital $2$-positive maps with domain $\M_n$ improving on this lower bound. 

Using probabilistic arguments, we obtain in Theorem \ref{theorem:probabilistic-bound} the lower bound
\[ d_k(\M_n) \geq c \sqrt{n/k} \]
for some absolute constant $c>0$.  

One difficulty in the study of maps with domain $\M_n$ is the fact, remarked but not proven in \cite{Pa1986}, that if $n \geq 3$ then there is no $m\in\N$ such that for all $\phi: \M_n \to B(\cl H)$,   $\| \phi \|_{cb} = \| \id_m \otimes \phi: \M_m \otimes \M_n \to \M_m \otimes B(\cl H) \|.$ Since this fact is somewhat central here, we will provide a belated proof in section \ref{S:cb not attained}.

Our work is also motivated by the recent paper of Passer and the 4th author \cite{PaPa} that proved, for finite dimensional operator systems, 
{\it exactness} and the {\it lifting property} could be characterized by the Hausdorff distance between certain sequences of matrix ranges affiliated with the operator systems tending to 0 in the limit. These notions are defined in section~\ref{sec: exactness and lifting prop.}.
We refer the reader to \cite{KPTT} for more information on exactness and lifting properties in the context of operator systems.

The connection between operator systems and matrix ranges uses a choice of a basis for the operator system. 
Our contributions in this direction replaces this vanishing of Hausdorff distance by the parameters $d_k(\cl S)$ and $r_k(\cl S)$, 
which are independent of a basis.  
For an operator system $\cl S$, consider the limits
\[
 r_{\infty} (\cl S) = \lim_{k \to \infty} r_k(\cl S) 
\]
and
\[
 d_{\infty} (\cl S) = \lim_{k \to \infty} d_k(\cl S). 
\]
Building on the work of \cite{PaPa}, we show in Theorem \ref{theorem:infinity} that for finite dimensional operator systems the values of these limits encode structural properties of $\cl S$,
indeed,
\[
 r_{\infty} (\cl S)=1 \iff \cl S \textnormal{ is exact} 
\]
and
\[
 d_{\infty} (\cl S)=1 \iff \cl S \textnormal{ has the lifting property.} 
\]
In section~\ref{sec: exactness and lifting prop.} we show that these results do not extend to infinite dimensional operator systems.

Thus, for finite dimensional operator systems, $r_{\infty}(\cl S)$ and $d_{\infty}(\cl S)$ give measures of non-exactness 
and of the failure of the lifting property.

\section{Background}\label{sec: background}

Recall that an abstract operator system is just a $*$-vector space with a specified order unit and a matrix order satisfying certain axioms.  
Every operator system has a completely order isomorphic representation as a self-adjoint unital subspace of $\cl B(H)$ \cite{CE} 
(see \cite{Pa2002}*{Theorem 13.1}) and the matrix order endows the operator system with a matrix norm, making it into an operator space.
Given an operator system $\cl S$, we let  $\M_n(\cl S)$ denote the vector space of $n \times n$ matrices with entries from $\cl S$ and let $\M_n(\cl S)^+ \subseteq \M_n(\cl S)$ denote the cone of positive elements.
Given an operator system and a natural number $k$, Xhabli \cites{Xh1, Xh2} introduced new operator systems, denoted $\OMAX^k(\cl S)$ and $\OMIN^k(\cl S)$ (see also \cite{PTT}) . 

We note that since Xhabli's work appeared, the superscript has often been replaced by a subscript by subsequent authors, i.e., $\OMAX_k(\cl S) \equiv \OMAX^k(\cl S)$ and $\OMIN_k(\cl S) \equiv \OMIN^k(\cl S)$. In particular, this is the case in the work of A.S. Kavruk \cite{Ka} and \cite{PaPa}, which we will be referencing, so we adopt the subscript notation here as well.

As unital $*$-vector spaces, $\OMIN_k(\cl S)$ and $\OMAX_k(\cl S)$ are just $\cl S$ with different matrix orders.  
We need to name several maps between these spaces which are setwise just the identity map.
If $\cl A$ and $\cl B$ are two operator spaces or systems which are the same set with different matricial norms or orders, then we will write $\id_{\!\cl A}^{\, \cl B}$ to
denote $\id:\cl A \to \cl B$.
The new matrix orders are characterized uniquely by the following universal properties:
\begin{enumerate}
\item $\id_{\OMAX_k(\cl S)}^{\ \cl S}: \OMAX_k(\cl S) \to \cl S$ and $\id_{\cl S}^{\,\OMIN_k(\cl S)}: \cl S \to \OMIN_k(\cl S)$ are unital completely positive (UCP),
\item $\id_{\cl S}^{\,\OMAX_k(\cl S)}: \cl S \to \OMAX_k(\cl S)$ and $\id_{\OMIN_k(\cl S)}^{\ \cl S}: \OMIN_k(\cl S) \to \cl S$ are unital and $k$-positive (\UkP),
\item for any other operator system $\cl T$, $\phi: \cl T \to \cl S$ is {\UkP} if and only if $\id_{\cl S}^{\,\OMIN_k(\cl S)} \circ \phi: \cl T \to \OMIN_k(\cl S)$ is UCP,
\item for any other operator system $\cl T$, $\psi: \cl S \to \cl T$ is {\UkP} if and only if $\psi \circ \id_{\OMAX_k(\cl S)}^{\ \cl S}: \OMAX_k(\cl S) \to \cl T$ is UCP.
\end{enumerate}

We let $\M_n(\OMAX_k(\cl S))^+ , \M_n(\OMIN_k(\cl S))^+ $ denote the matrix cones of $\OMAX_k(\cl S)$ and $\OMIN_k(\cl S)$ respectively. Note that it holds that
\[\M_n(\OMAX_k(\cl S))^+ \subseteq \M_n(\cl S)^+; \ \text{and} \ \ \M_n(\cl S)^+\subseteq\M_n(\OMIN_k(\cl S))^+ .\]

In the case that $\cl S= \M_m$ these cones appear in many other places in the literature.  In \cite{JoKrPaPe}*{Theorem~5} it is shown that
 $\rho \in \M_n(\OMAX_k(\M_m))^+$ if and only if $SN(\rho) \leq k$, while $\rho \in \M_n(\OMIN_k(\M_m))^+$ if and only if $\rho$ is $k$-block positive. In particular, we have $\OMAX_m(\M_m)=\M_m=\OMIN_m(\M_m)$.
 
 Explicit descriptions of these cones for a general operator system can be found in Xhabli's papers, and in \cite{Ka} and \cite{PaPa}, so we do not repeat them here.  We do remark on one fact that we shall use.  Given an operator system $\cl S$, the set 
 \[
  \Omega_k := \UCP(\cl S, \M_k) =  \{ \phi \, \vert \, \phi: \cl S \to \M_k \, \textnormal{UCP} \},
 \]
 equipped with the weak$*$-topology, is often called the \emph{$k$-th matrix state space}. Given $x \in \cl S$ we let
 \[
  \widehat{x}: \Omega_k \to \M_k,
 \]
 be the continuous affine function given by $\widehat{x}(\phi) = \phi(x)$. Thus, $\widehat{x}$ is an element of the C*-algebra $C(\Omega_k) \otimes \M_k$, which we identify with the continuous functions from $\Omega_k$ to $\M_k$.
With these identifications, the map
\[
 \Gamma: \OMIN_k(\cl S) \to C(\Omega_k) \otimes \M_k; \,\, x \to \widehat{x},
\]
is a unital complete order embedding. So we may identify
\[
 \OMIN_k(\cl S) \equiv \{ \widehat{x}: x \in \cl S \} \subseteq C(\Omega_k) \otimes \M_k.
\]
 
 These operator system constructions give us another way to study the parameters that we introduced earlier.
 
 \begin{prop} \label{prop:rkdkViaId}
 Let $\cl S$ be an operator system. Then
 \[
  r_k(\cl S) =  \| \id_{\OMIN_k(\cl S)}^{\ \cl S} \|_{cb},
 \]
 and
 \[
  d_k(\cl S)  = \| \id_{\cl S}^{\,\OMAX_k(\cl S)} \|_{cb}.
 \]
 \end{prop}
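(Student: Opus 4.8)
The plan is to prove the two equalities separately, each by a sandwich argument combining the universal properties (1)--(4) with the definitions of $r_k$ and $d_k$.

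For the first equality, I would argue both inequalities. For ``$\geq$'': the map $\id_{\OMIN_k(\cl S)}^{\ \cl S}$ is, by property (2), a unital $k$-positive map from the operator system $\OMIN_k(\cl S)$ into $\cl S$, so by the very definition of $r_k(\cl S)$ as a supremum over all \UkP{} maps into $\cl S$, its cb norm is $\leq r_k(\cl S)$; wait, that gives the wrong direction, so instead I must exhibit that every competitor in the supremum defining $r_k(\cl S)$ has cb norm at most $\|\id_{\OMIN_k(\cl S)}^{\ \cl S}\|_{cb}$, which will give ``$\leq$'', and then separately note the map $\id_{\OMIN_k(\cl S)}^{\ \cl S}$ itself is a competitor, giving ``$\geq$''. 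So for ``$\leq$'': let $\phi: \cl T \to \cl S$ be \UkP. By property (3), $\id_{\cl S}^{\,\OMIN_k(\cl S)} \circ \phi: \cl T \to \OMIN_k(\cl S)$ is UCP, hence completely contractive: $\|\id_{\cl S}^{\,\OMIN_k(\cl S)} \circ \phi\|_{cb} \leq 1$. Now factor $\phi = \id_{\OMIN_k(\cl S)}^{\ \cl S} \circ (\id_{\cl S}^{\,\OMIN_k(\cl S)} \circ \phi)$ as a setwise identity composed with this UCP map, and use submultiplicativity of the cb norm to get $\|\phi\|_{cb} \leq \|\id_{\OMIN_k(\cl S)}^{\ \cl S}\|_{cb} \cdot 1$. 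Taking the supremum over all such $\phi$ yields $r_k(\cl S) \leq \|\id_{\OMIN_k(\cl S)}^{\ \cl S}\|_{cb}$. For ``$\geq$'': the map $\id_{\OMIN_k(\cl S)}^{\ \cl S}$ is unital and $k$-positive by property (2), with domain the operator system $\OMIN_k(\cl S)$ and range $\cl S$, so it is one of the maps over which the supremum defining $r_k(\cl S)$ is taken, whence $\|\id_{\OMIN_k(\cl S)}^{\ \cl S}\|_{cb} \leq r_k(\cl S)$. Combining gives the first equality.

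The second equality is entirely dual. For ``$\leq$'': let $\psi: \cl S \to \cl T$ be \UkP. By property (4), $\psi \circ \id_{\OMAX_k(\cl S)}^{\ \cl S}: \OMAX_k(\cl S) \to \cl T$ is UCP, hence completely contractive. Factoring $\psi = (\psi \circ \id_{\OMAX_k(\cl S)}^{\ \cl S}) \circ \id_{\cl S}^{\,\OMAX_k(\cl S)}$ and using submultiplicativity of the cb norm gives $\|\psi\|_{cb} \leq 1 \cdot \|\id_{\cl S}^{\,\OMAX_k(\cl S)}\|_{cb}$; taking the supremum over all such $\psi$ gives $d_k(\cl S) \leq \|\id_{\cl S}^{\,\OMAX_k(\cl S)}\|_{cb}$. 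For ``$\geq$'': by property (2), $\id_{\cl S}^{\,\OMAX_k(\cl S)}: \cl S \to \OMAX_k(\cl S)$ is a unital $k$-positive map from $\cl S$ into the operator system $\OMAX_k(\cl S)$, hence a competitor in the supremum defining $d_k(\cl S)$, so $\|\id_{\cl S}^{\,\OMAX_k(\cl S)}\|_{cb} \leq d_k(\cl S)$.

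The argument is essentially formal once the universal properties are in hand, so I do not anticipate a serious obstacle; the only point requiring a little care is the verification that the composition of a setwise-identity map with a UCP map has cb norm bounded by the cb norm of the identity map alone (i.e.\ that the UCP factor contributes a factor of $1$), which is just submultiplicativity of the completely bounded norm together with $\|\,\cdot\,\|_{cb} \leq 1$ for UCP maps. One should also note in passing that the suprema defining $r_k$ and $d_k$ are over a potentially proper class of operator systems $\cl T$, but since each such $\cl T$ can be replaced by the range of the relevant map and the bounds are uniform, this causes no difficulty; the witnesses $\OMIN_k(\cl S)$ and $\OMAX_k(\cl S)$ are genuine operator systems by Xhabli's construction.
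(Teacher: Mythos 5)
Your proof is correct and follows essentially the same route as the paper's: the lower bound comes from noting that $\id_{\OMIN_k(\cl S)}^{\ \cl S}$ (resp.\ $\id_{\cl S}^{\,\OMAX_k(\cl S)}$) is itself a \UkP{} competitor, and the upper bound comes from factoring an arbitrary \UkP{} map through the UCP map supplied by the universal property and using submultiplicativity of the cb norm. The only difference is cosmetic — you write out the dual $\OMAX_k$ argument that the paper dismisses as ``similar'' — so nothing further is needed.
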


 
 \begin{proof} 
 We only prove the first equality, the second is similar.  
 Since the map $\id_{\OMIN_k(\cl S)}^{\ \cl S} $ is \UkP, the left hand side is larger than the right hand side. 
 Given a {\UkP} map $\phi: \cl T \to \cl S$, let $\psi = \id_{\cl S}^{\,\OMIN_k(\cl S)} \circ \phi$.
 By the properties of $\OMIN_k(\cl S)$, $\psi$ is CP with $\|\psi\|_{cb} = \|\psi(1)\| = 1$.
 Now $\phi = \id_{\OMIN_k(\cl S)}^{\ \cl S} \circ \psi$; whence 
 \[
  \| \phi \|_{cb} \leq \| \id_{\OMIN_k(\cl S)}^{\ \cl S} \|_{cb} \|\psi \|_{cb} =  \| \id_{\OMIN_k(\cl S)}^{\ \cl S} \|_{cb} .
 \]
Therefore $r_k(\cl S) = \| \id_{\OMIN_k(\cl S)}^{\ \cl S} \|_{cb}$.
 \end{proof}


 \section{Bounds for matrix algebras}

Recall that a $k$-positive map with domain or range equal to the matrix algebra $\M_n$ is completely positive for any $k \geq n$. In this section we study $r_k(\M_n)$ and $d_k(\M_n)$ for $k<n$, and we will need the following norm: Given two operator systems $\cl S, \cl T$ and a linear map $\phi: \cl S \to \cl T$ such that $\phi(X^*) = \phi(X)^*$, 
 Haagerup \cite{Hag} defines the \emph{decomposition norm} of the map to be
 \begin{equation}\label{equ:DecNorm}
  \| \phi \|_{dec} = \inf  \big\{ \|\phi^+(I) + \phi^-(I) \| : \phi^\pm \in \CP(\cl S, \cl T), \ \phi = \phi^+-\phi^- \big\},
 \end{equation}
 and $\| \phi \|_{dec} = + \infty$ if no such decomposition exists.
 It is easy to see that $\| \phi \|_{cb} \leq \| \phi \|_{dec}$. 
 By Wittstock's decomposition theorem (\cite{Wi}, see also chapter 8 of \cite{Pa2002}), when $\cl T$ is an injective operator system such as $\M_n$, then $\| \phi \|_{cb} = \| \phi \|_{dec}$.

Let $\cl U_n$ denote the unitary group of $\M_n$.
The map $\Ad_U(X) = U^*XU$ is a completely isometric automorphism.
We say that a map $\phi : \M_n \to \M_n$ is \emph{covariant} if 
$\phi(\Ad_U(X)) = \Ad_U ( \phi(X) )$ for every $U \in \cl U_n$ and $X \in \M_n$. 
We need the following lemma. 


 \begin{lemma} \label{lemma:wittstock-covariant}
 For any self-adjoint covariant map $\phi : \OMIN_k(\M_n)\to \M_n$ we have
 \[
  \|\phi\|_{dec} = \min \big\{ \|\phi^+(I_n) + \phi^-(I_n) \| : \phi^\pm \in \CP(\OMIN_k(\M_n), \M_n), \ \phi = \phi^+-\phi^- \big\},
 \]
 where the minimum is achieved by covariant maps $\phi^+$ and $\phi^-$.
 \end{lemma}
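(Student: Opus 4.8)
The plan is to combine Wittstock's decomposition theorem with an averaging argument over the unitary group, exploiting that the cone structures of both $\OMIN_k(\M_n)$ and $\M_n$ are invariant under $\Ad_U$. First I would recall that by the remark preceding the lemma, since $\M_n$ is injective we have $\|\phi\|_{cb} = \|\phi\|_{dec}$, and crucially Wittstock's theorem guarantees that the infimum in the definition of $\|\phi\|_{dec}$ is actually attained: there exist $\phi^\pm \in \CP(\OMIN_k(\M_n),\M_n)$ with $\phi = \phi^+ - \phi^-$ and $\|\phi^+(I_n)+\phi^-(I_n)\| = \|\phi\|_{dec}$. This already replaces ``inf'' by ``min''; the remaining work is to show the minimizing pair can be taken covariant.

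The key step is the averaging. Given an optimal decomposition $\phi = \phi^+ - \phi^-$, define for each $U \in \cl U_n$ the conjugated maps $\Ad_{U}^{-1} \circ \phi^\pm \circ \Ad_U$, and set
\[
\widetilde{\phi^\pm}(X) = \int_{\cl U_n} \Ad_{U}^{-1}\big(\phi^\pm(\Ad_U(X))\big)\, d\Haar(U),
\]
the integral over normalized Haar measure on $\cl U_n$. Since $\Ad_U$ is a complete order isomorphism on both $\OMIN_k(\M_n)$ (the cones $\M_n(\OMIN_k(\M_n))^+$ are $\Ad_U$-invariant, as $k$-block positivity is unitarily invariant) and on $\M_n$, each integrand $\Ad_U^{-1}\circ\phi^\pm\circ\Ad_U$ is completely positive from $\OMIN_k(\M_n)$ to $\M_n$; hence so is the average $\widetilde{\phi^\pm}$, because the CP cone is closed and convex and the integral is a limit of convex combinations. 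By construction $\widetilde{\phi^\pm}$ is covariant, and $\widetilde{\phi^+} - \widetilde{\phi^-} = \phi$ because $\phi$ itself is covariant, so the averaging leaves the difference unchanged. It remains to check the norm does not increase: since $\Ad_U(I_n) = I_n$, we have $\widetilde{\phi^\pm}(I_n) = \int_{\cl U_n}\Ad_U^{-1}(\phi^\pm(I_n))\,d\Haar(U)$, and by the triangle inequality together with $\|\Ad_U^{-1}(Y)\| = \|Y\|$,
\[
\big\|\widetilde{\phi^+}(I_n) + \widetilde{\phi^-}(I_n)\big\| \le \int_{\cl U_n} \big\|\Ad_U^{-1}\big(\phi^+(I_n)+\phi^-(I_n)\big)\big\|\, d\Haar(U) = \|\phi^+(I_n)+\phi^-(I_n)\|.
\]
Thus the covariant pair achieves a value at most $\|\phi\|_{dec}$, and since any decomposition gives at least $\|\phi\|_{dec}$, equality holds and the minimum is attained by covariant maps.

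I expect the only genuine subtlety to be the justification that the Haar integral of a CP-valued function is again CP, and that $\widetilde{\phi^\pm}$ is well-defined — but this is routine since $\cl U_n$ is compact, the integrand is norm-continuous in $U$, the target space $\M_n$ is finite-dimensional, and the cone $\CP(\OMIN_k(\M_n),\M_n)$ is a closed convex cone in the finite-dimensional space of linear maps, hence stable under integration against a probability measure. One should also note in passing that self-adjointness of $\widetilde{\phi^\pm}$ is inherited from that of $\phi^\pm$. The covariance of the difference is immediate from a change of variables $U \mapsto UV$ in the Haar integral combined with the covariance hypothesis on $\phi$; alternatively, one observes directly that $\Ad_V^{-1}\circ\widetilde{\phi^\pm}\circ\Ad_V = \widetilde{\phi^\pm}$ by invariance of Haar measure.
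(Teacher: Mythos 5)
Your proposal is correct and follows essentially the same route as the paper's proof: take an optimal decomposition (attained since the range is $\M_n$), average $\Ad_U^{-1}\circ\phi^\pm\circ\Ad_U$ over Haar measure on $\cl U_n$, and check that the averaged maps are completely positive, covariant, still decompose $\phi$ by covariance, and do not increase $\|\phi^+(I_n)+\phi^-(I_n)\|$. The only cosmetic difference is that you justify complete positivity of conjugation on $\OMIN_k(\M_n)$ via unitary invariance of $k$-block positivity, whereas the paper deduces it from the universal property of $\OMIN_k$; both are valid.
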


 \begin{proof}
 Clearly, the infimum in \eqref{equ:DecNorm} is attained when the range is $M_n$. Note that for any $U \in \M_n$ the map $\gamma_U: \OMIN_k(\M_n) \to \OMIN_k(\M_n)$ given by $\gamma_U(X) = U^*XU$ is completely positive.  
 Indeed, we have 
 \[
 \gamma_U = \id_{\M_n}^{\,\OMIN_k(\M_n)} \circ \Ad_U \circ \id_{\OMIN_k(\M_n)}^{\,\M_n}
 \]
 Now $\Ad_U \circ \id_{\OMIN_k(\M_n)}^{\,\M_n}$ is $\UkP$ since $\Ad_U$ is UCP, and hence by the property of $\OMIN_k(\M_n)$, $\gamma_U$ is UCP. Assume that $\phi$ is covariant and let $\phi^{\pm}$ be maps achieving the minimum. We set
 \[
  \psi^{\pm} = \int_{\cl U_n} \Ad_U\circ \phi^{\pm}\circ \gamma_U \, dU ,
 \]
 where the integration is with respect to the Haar measure on the compact group $\cl U_n$. Observe that the maps $\psi^{\pm}: \OMIN_k(\M_n) \to \M_n$ are CP. Since $\phi$ is covariant, we have
 \begin{align*}
  \phi(X) &= \int_{\cl U_n} U \phi(U^*XU) U^* \, dU \\&
  = \int_{\cl U_n} U \phi^+(U^*XU) U^* \, dU - \int_{\cl U_n} U \phi^-(U^*XU) U^* \, dU \\&
  = \psi^+(X) - \psi^-(X). 
 \end{align*}
 Using the triangle inequality and the minimality of $\phi^\pm$, we have
 \[
  \|\phi^+(I_n) + \phi^-(I_n) \| \leq \|\psi^+(I_n) + \psi^-(I_n) \| \leq \|\phi^+(I_n) + \phi^-(I_n) \| .
 \]
 Therefore, equality holds throughout and the result follows since the maps $\psi^\pm$ are covariant.
 \end{proof}

The next result is the main theorem of this section.

\begin{thm}\label{OMINthm} For any $k, n \in \bb N$ with $k <n$ we have that
\[
 r_k(\M_n) = \| \id_{\OMIN_k(\M_n)}^{\,\M_n} \|_{dec} = \frac{2n-k}{k}.
\]
\end{thm}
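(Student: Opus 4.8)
The plan is to compute $r_k(\M_n) = \|\id_{\OMIN_k(\M_n)}^{\,\M_n}\|_{dec}$ by establishing the two inequalities separately, exploiting covariance throughout. For the upper bound $\|\id_{\OMIN_k(\M_n)}^{\,\M_n}\|_{dec} \le \frac{2n-k}{k}$, I would exhibit an explicit covariant decomposition $\id = \phi^+ - \phi^-$ into CP maps $\phi^\pm : \OMIN_k(\M_n) \to \M_n$. By Schur--Weyl / the structure of covariant maps, any covariant map $\M_n \to \M_n$ is of the form $X \mapsto a X + b\,\tr(X) I_n$; the natural candidates are built from the identity map $X \mapsto X$ and the depolarizing map $X \mapsto \tr(X) I_n/n$. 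The key point is to determine which scalar combinations $X \mapsto \alpha\, \tr(X) I_n/n + \beta X$ are completely positive \emph{as maps out of $\OMIN_k(\M_n)$}, i.e.\ are $k$-positive in the usual sense; this is exactly the content of the Tomiyama/Choi computation recalled in~\eqref{equ:TomiyamaEx}, so one knows the precise threshold. Choosing $\phi^+$ to be (a positive multiple of) the appropriate extreme $k$-positive combination and $\phi^-$ the corresponding correction, both manifestly CP on $\OMIN_k(\M_n)$, one computes $\|\phi^+(I_n) + \phi^-(I_n)\| = \frac{2n-k}{k}$ by a direct scalar calculation, using $\id_{\OMIN_k(\M_n)}^{\,\M_n}$ being CP precisely on the sub-cone of $k$-block-positive elements.

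For the lower bound $r_k(\M_n) \ge \frac{2n-k}{k}$, I would use Lemma~\ref{lemma:wittstock-covariant}: the infimum defining $\|\id\|_{dec}$ is attained by covariant $\phi^\pm$. So it suffices to show that for \emph{every} covariant CP decomposition $\id = \phi^+ - \phi^-$ on $\OMIN_k(\M_n)$ one has $\|\phi^+(I_n) + \phi^-(I_n)\| \ge \frac{2n-k}{k}$. Parametrize: $\phi^\pm(X) = \alpha_\pm \tr(X) I_n/n + \beta_\pm X$ with the constraint $\beta_+ - \beta_- = 1$ and $\alpha_+ - \alpha_- = 0$ coming from $\phi^+ - \phi^- = \id$. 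The CP-on-$\OMIN_k$ condition for each $\phi^\pm$ translates, via the description of $\M_n(\OMIN_k(\M_n))^+$ as $k$-block-positive matrices (equivalently, via testing against the Choi matrix / the structure of $k$-positive covariant maps), into linear inequalities relating $\alpha_\pm$ and $\beta_\pm$. Then $\|\phi^+(I_n) + \phi^-(I_n)\| = \alpha_+ + \alpha_- + \beta_+ + \beta_- = |\beta_+| + |\beta_-| + (\text{trace part})$, and minimizing this linear functional over the feasible polyhedron yields exactly $\frac{2n-k}{k}$, with the minimizer being the decomposition found in the first part.

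The main obstacle is the second ($\ge$) direction: one must correctly identify the CP cone $\CP(\OMIN_k(\M_n), \M_n)$ for covariant maps — that is, pin down exactly which $(\alpha,\beta)$ give a $k$-positive map of the stated covariant form. This is where the precise constant $\frac{2n-k}{k}$ (rather than the off-the-shelf $2n$) comes from, and it requires the sharp characterization of extreme $k$-positive Choi-type maps (generalizing Tomiyama); getting the boundary of this region right, and verifying the linear optimization has the claimed value, is the crux. A secondary subtlety is making sure the reduction to the covariant scalar parametrization is complete — i.e.\ that every covariant linear self-adjoint map $\M_n \to \M_n$ really has the two-parameter form $X \mapsto \alpha\,\tr(X)I_n/n + \beta X$ — which follows from irreducibility of the conjugation action of $\cl U_n$ on trace-zero matrices, but should be stated cleanly. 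Once these are in place, combining both inequalities with Proposition~\ref{prop:rkdkViaId} and the Wittstock identity $\|\cdot\|_{cb} = \|\cdot\|_{dec}$ for range $\M_n$ gives the theorem.
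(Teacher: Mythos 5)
Your overall skeleton --- Proposition~\ref{prop:rkdkViaId}, the identity $\|\cdot\|_{cb}=\|\cdot\|_{dec}$ for maps into the injective system $\M_n$, the covariant averaging of Lemma~\ref{lemma:wittstock-covariant}, the two-parameter form $\phi_{s,t}(X)=sX+t\,\tr(X)I_n/n$ of self-adjoint covariant maps, and a small linear program --- is exactly the route the paper takes. But there is a genuine error at the step you yourself call the crux: you assert that a covariant map is completely positive \emph{as a map out of} $\OMIN_k(\M_n)$ if and only if it is $k$-positive in the usual sense, and you propose to read the feasible region off the Tomiyama threshold \eqref{equ:TomiyamaEx}. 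That identification is wrong, and with it the computation collapses: the identity map $\M_n\to\M_n$ is itself completely positive, hence $k$-positive, so with your constraints the trivial decomposition $\phi^+=\id$, $\phi^-=0$ would be feasible and the linear program would return the value $1$, not $\frac{2n-k}{k}$.

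The correct statement (Proposition~\ref{k-PEBprop}) is that $\psi:\OMIN_k(\M_n)\to\M_n$ is CP if and only if $\psi$ is $k$-partially entanglement breaking ($k$-superpositive), i.e.\ its Choi matrix has Schmidt number at most $k$; this is the cone \emph{dual} to the $k$-positive cone, not the $k$-positive cone itself. The universal property identifies $k$-positivity of a map $\phi:\cl T\to\M_n$ with complete positivity of $\phi$ viewed as a map \emph{into} $\OMIN_k(\M_n)$; putting $\OMIN_k$ on the \emph{domain} dualizes the condition, because $\M_p(\OMIN_k(\M_n))^+$ is the larger cone of $k$-block-positive elements and positivity on a larger cone is a stronger demand. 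Consequently the feasible set for the covariant maps $\phi_{s,t}$ is the $k$-PEB region of \cite{Kye} (Theorem~1.7.3), namely $s+t\geq 0$ and $-\frac{s+t}{n^2-1}\leq s\leq (s+t)\frac{nk-1}{n^2-1}$, and the theorem comes from minimizing $\|\phi_{s+1,t}(I_n)+\phi_{s,t}(I_n)\|=2(s+t)+1$ subject to both $(s+1,t)$ and $(s,t)$ lying in that region, which gives $s+t=\frac{n-k}{k}$ and the value $\frac{2n-k}{k}$. With this substitution your plan goes through, and the secondary points you flag (completeness of the two-parameter covariant parametrization, attainment of the minimum by covariant maps) are handled exactly as you expect.
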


\begin{proof} 
By Proposition \ref{prop:rkdkViaId} we have 
\[
 r_k(\M_n) = \| \id_{\OMIN_k(\M_n)}^{\,\M_n} \|_{cb} = \| \id_{\OMIN_k(\M_n)}^{\,\M_n} \|_{dec} ,
\]
where the second equality follows from Wittstock's decomposition theorem (\cite{Wi}, see also chapter 8 of \cite{Pa2002}) since $\M_n$ is injective.

In order to apply Lemma \ref{lemma:wittstock-covariant}, we note that $*$-preserving covariant maps on $\M_n$ are parametrized as
\[ 
\phi_{s,t} (X) = s X + t \frac{ \tr X }{n} I_n
\]
for real numbers $s$ and $t$, see \cite{Kye}*{Proposition~1.7.2}. By Proposition~\ref{k-PEBprop}, the set of covariant CP maps from $\OMIN_k(\M_n)$ into $\M_n$ coincides with the set of covariant $k$-PEB maps. By \cite{Kye}*{Theorem~1.7.3} (which covers the case $s+t=1$) the parameters $(s,t)\in \R^2$ corresponding to these maps are given by
\begin{align*}
\Gamma &= \{ (s,t) \in \R^2 \st \phi_{s,t} \textnormal{ is }k\textnormal{-PEB} \} \\
&=\left\{ (s,t) \st s+t \geq 0 \ \textnormal{ and } -\frac{s+t}{n^2-1} \leq s \leq (s+t)\frac{nk-1}{n^2-1} \right\}.
\end{align*}

Applying Lemma \ref{lemma:wittstock-covariant} to the covariant map $\id_{\OMIN_k(\M_n)}^{\,\M_n}$ shows that
\begin{align*}
 r_k(\M_n) &= \| \id_{\OMIN_k(\M_n)}^{\,\M_n} \|_{dec} \\&=  
 \min \big\{ \|\phi_{s+1,t}(I_n)+\phi_{s,t}(I_n) \| \st (s,t) \in \Gamma, (s+1,t) \in \Gamma \big\}. 
\end{align*}
We compute that $\|\phi_{s+1,t}(I_n)+\phi_{s,t}(I_n) \| = 2s+2t+1$. 
Reparameterizing, we set $r=s+t$, $a= \frac{1}{n^2-1}$ and $b =\frac{nk-1}{n^2-1}$. 
We now need to minimize the quantity $2r+1$ under the constraints:
\begin{enumerate}
\item $r \geq 0$,
\item $ -ar \leq s \leq br$,
\item $-a(r+1) \leq s+1 \leq b(r+1)$.
\end{enumerate}
Thus, $s \leq br$ and $s+1 \leq b(r+1)$ imply that
\[
  s \leq br + (b-1),
\]
and $-ar \leq s$ and $-a(r+1) \leq s+1$ imply that
\[ 
 -ar \leq s.
\]
So the smallest value of $r$ is where the two boundary lines intersect, at 
\[
 r= \frac{1-b}{a+b}=\frac{1- \frac{nk-1}{n^2-1}}{\frac{1}{n^2-1} + \frac{nk-1}{n^2-1}} =\frac{n^2-nk}{nk} = \frac{n-k}{k}.
\]
This yields a value for the minimum of $2r+1 = \frac{2n-k}{k}$.
\end{proof}
We get an immediate corollary for unital $k$-positive maps between matrix algebras having $\M_n$ as range:
\begin{cor}
Let $\phi: \M_m\rightarrow \M_n$ be a unital and $k$-positive map. Then 
\[\|\phi\|_{cb}\leq \frac{2n-k}{k}.\]
\end{cor}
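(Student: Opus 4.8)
The statement to prove is the corollary: if $\phi : \M_m \to \M_n$ is unital and $k$-positive, then $\|\phi\|_{cb} \le \frac{2n-k}{k}$.

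This follows almost immediately from Theorem~\ref{OMINthm} together with the definition of $r_k(\cl S)$. The plan is as follows. First, observe that $\M_m$ is itself an operator system (a C*-algebra), so any unital $k$-positive map $\phi : \M_m \to \M_n$ is in particular a {\UkP} map from \emph{some} operator system $\cl T = \M_m$ into $\cl S = \M_n$. By the very definition of the parameter $r_k(\M_n)$ as a supremum over all operator systems $\cl T$ and all {\UkP} maps $\phi : \cl T \to \M_n$, we get $\|\phi\|_{cb} \le r_k(\M_n)$. Then invoke Theorem~\ref{OMINthm}, which computes $r_k(\M_n) = \frac{2n-k}{k}$ for $k < n$, to conclude. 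For the boundary case $k \ge n$ one notes that a $k$-positive map out of or into $\M_n$ is automatically completely positive, so $\|\phi\|_{cb} = \|\phi(I)\| = 1 \le \frac{2n-k}{k}$ (the latter holding since $k \le 2n$ whenever $k \le n$), although the corollary as stated is really about the generic range case.

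There is essentially no obstacle here: the corollary is a direct specialization of Theorem~\ref{OMINthm}, and the only "content" is recognizing that restricting the domain to a matrix algebra $\M_m$ can only decrease (or leave unchanged) the supremum defining $r_k$. One could phrase the one-line proof as: ``This is immediate from Theorem~\ref{OMINthm} and the definition of $r_k(\M_n)$, since $\M_m$ is an operator system and $\phi$ is a {\UkP} map into $\M_n$.'' If desired, one can add the remark that the bound is in fact attained in the limit as $m \to \infty$ — the supremum defining $r_k(\M_n)$ over operator system domains is a limit of such suprema over matrix algebra domains $\M_m$ — but this is not needed for the inequality.

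I do not anticipate any hard part; the work was all in Theorem~\ref{OMINthm}. If I wanted to make the corollary self-contained without citing the abstract parameter, I would instead argue directly: factor $\phi$ through $\OMIN_k(\M_n)$ via $\phi = \id_{\OMIN_k(\M_n)}^{\,\M_n} \circ \psi$ where $\psi = \id_{\M_n}^{\,\OMIN_k(\M_n)} \circ \phi : \M_m \to \OMIN_k(\M_n)$ is UCP (by property (3) of $\OMIN_k$), so $\|\psi\|_{cb} = 1$, and hence $\|\phi\|_{cb} \le \|\id_{\OMIN_k(\M_n)}^{\,\M_n}\|_{cb} = \frac{2n-k}{k}$ by Theorem~\ref{OMINthm}. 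Either route is short and routine.
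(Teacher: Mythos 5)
Your proof is correct and is precisely the paper's (unstated) argument: the corollary is an immediate specialization of Theorem~\ref{OMINthm} via the definition of $r_k(\M_n)$, and your alternative self-contained route of factoring $\phi$ through $\OMIN_k(\M_n)$ is exactly the content of Proposition~\ref{prop:rkdkViaId}. One caveat: your optional aside that the supremum defining $r_k(\M_n)$ is the limit of the corresponding suprema over matrix-algebra domains $\M_m$ conflicts with the paper's remark that restricting the domain to C*-algebras (with $k=1$) yields only $n$, strictly less than $r_1(\M_n)=2n-1$, so that remark should be dropped --- as you note, it is not needed for the inequality.
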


The following rephrasing of Theorem \ref{OMINthm} might be of independent interest:

\begin{cor} For $X\in \M_m(\M_n)$ we have
\[
 \|X \| \leq \frac{2n-k}{k} \max \{ \| (I_m\otimes P)(X\otimes I_k)(I_m\otimes P) \| : P \in \M_{nk} \text{ a rank $k$ projection} \},
\]
and there exists a non-zero $X\in \M_n(\M_n)$ achieving equality in this bound.
\end{cor}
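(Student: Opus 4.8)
The plan is to treat the Corollary as Theorem~\ref{OMINthm} rewritten through Proposition~\ref{prop:rkdkViaId} plus a concrete description of the matrix norms of $\OMIN_k(\M_n)$. By Proposition~\ref{prop:rkdkViaId} and Theorem~\ref{OMINthm}, $\|\id_{\OMIN_k(\M_n)}^{\,\M_n}\|_{cb}=\frac{2n-k}{k}$, so for every $m$ and every $X\in\M_m(\M_n)$,
\[
\|X\|_{\M_m(\M_n)}\ \le\ \|\id_{\OMIN_k(\M_n)}^{\,\M_n}\|_{cb}\,\|X\|_{\M_m(\OMIN_k(\M_n))}\ =\ \frac{2n-k}{k}\,\|X\|_{\M_m(\OMIN_k(\M_n))}.
\]
The whole statement then reduces to the identity
\[
\|X\|_{\M_m(\OMIN_k(\M_n))}\ =\ \max\bigl\{\|(I_m\otimes P)(X\otimes I_k)(I_m\otimes P)\|\ :\ P\in\M_{nk}\ \text{a rank-}k\ \text{projection}\bigr\}
\]
and to knowing that the completely bounded norm above is attained at matrix level $n$.

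To establish the identity I would compute $\|\cdot\|_{\M_m(\OMIN_k(\M_n))}$ directly from the formula $\|Y\|=\inf\{t>0:\left[\begin{smallmatrix}tI&Y\\Y^{*}&tI\end{smallmatrix}\right]\ge0\}$, valid in any operator system. The $2\times2$ block matrix lies in $\M_{2m}(\OMIN_k(\M_n))$, and by the description of the matrix cones recalled in Section~\ref{sec: background} (from \cite{JoKrPaPe}) its positivity means $k$-block positivity of an element of $\M_{2m}\otimes\M_n$. Writing a vector of $\C^{2m}\otimes\C^{n}=(\C^{m}\otimes\C^{n})\oplus(\C^{m}\otimes\C^{n})$ as $\psi_{1}\oplus\psi_{2}$, one checks that it has Schmidt rank at most $k$ across the $\C^{2m}\mid\C^{n}$ cut exactly when $\psi_{1},\psi_{2}$ both lie in $\C^{m}\otimes U$ for some subspace $U\subseteq\C^{n}$ with $\dim U\le k$, and that the quadratic form of $\left[\begin{smallmatrix}tI&X\\X^{*}&tI\end{smallmatrix}\right]$ is non-negative on all such vectors precisely when $t\ge\|(I_{m}\otimes P_{U})X(I_{m}\otimes P_{U})\|$ for every such $U$. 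Hence $\|X\|_{\M_m(\OMIN_k(\M_n))}=\max\{\|(I_m\otimes P_U)X(I_m\otimes P_U)\|:U\subseteq\C^{n},\ \dim U\le k\}$. To pass to the form in the statement I would sandwich: on one side, $P=P_{U}\otimes\ketbra{e}{e}$ for a unit vector $e\in\C^{k}$ is a rank-$k$ projection on $\C^{nk}=\C^{n}\otimes\C^{k}$ with $(I_m\otimes P)(X\otimes I_k)(I_m\otimes P)=\bigl((I_m\otimes P_U)X(I_m\otimes P_U)\bigr)\otimes\ketbra{e}{e}$, so the maximum in the Corollary dominates $\|X\|_{\M_m(\OMIN_k(\M_n))}$; on the other side, an arbitrary rank-$k$ projection $P=WW^{*}$ on $\C^{n}\otimes\C^{k}$ (with $W\colon\C^{k}\to\C^{n}\otimes\C^{k}$ an isometry) gives a map $\phi_{P}(Z)=W^{*}(Z\otimes I_{k})W$ which, being $\phi_{P}\circ\id_{\OMIN_k(\M_n)}^{\,\M_n}$, is the composition of the {\UkP} map $\id_{\OMIN_k(\M_n)}^{\,\M_n}$ with a UCP map (and composing a $k$-positive map with a completely positive one yields a $k$-positive map), hence {\UkP}; since its range is $\M_{k}$ it is completely positive, hence completely contractive, so $\|(I_m\otimes P)(X\otimes I_k)(I_m\otimes P)\|=\|(\id_{m}\otimes\phi_{P})(X)\|\le\|X\|_{\M_m(\OMIN_k(\M_n))}$. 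Taking the maximum over $P$ gives equality.

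For the equality case, Smith's lemma (see \cite{Pa2002}) — the completely bounded norm of a map into $\M_{n}$ equals its norm at matrix level $n$ — gives $\frac{2n-k}{k}=\|\id_{\OMIN_k(\M_n)}^{\,\M_n}\|_{cb}=\|\id_{n}\otimes\id_{\OMIN_k(\M_n)}^{\,\M_n}\|=\sup\{\|X\|_{\M_n(\M_n)}/\|X\|_{\M_n(\OMIN_k(\M_n))}:0\ne X\in\M_n(\M_n)\}$; as $\M_n(\M_n)$ is finite dimensional and the ratio is continuous on its unit sphere, the supremum is attained by some non-zero $X\in\M_n(\M_n)$, which by the identity above realises equality in the bound.

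The substantive content is entirely in Theorem~\ref{OMINthm}; the only genuine care needed here is the $2\times2$-block computation of $\|\cdot\|_{\M_m(\OMIN_k(\M_n))}$, where one must be precise about the Schmidt-rank convention (which tensor cut, and rank $k$ versus rank $\le k$) and must reconcile the clean maximum over rank-$k$ projections of $\C^{n}$ with the maximum over rank-$k$ projections of $\C^{nk}$ appearing in the statement — the reconciliation being simply that both equal $\|X\|_{\M_m(\OMIN_k(\M_n))}$. I expect no obstacle beyond this bookkeeping.
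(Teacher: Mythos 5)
Your proposal is correct, and its skeleton coincides with the paper's: reduce to the inequality $\|X\|\leq\|\id_{\OMIN_k(\M_n)}^{\,\M_n}\|_{cb}\,\|X\|_{\M_m(\OMIN_k(\M_n))}$, invoke Theorem~\ref{OMINthm}, and get the equality case from Smith's lemma (cb-norm of a map into $\M_n$ attained at level $n$) together with finite-dimensional compactness. Where you differ is in how the norm of $\M_m(\OMIN_k(\M_n))$ is identified with the maximum over projections. The paper works inside the embedding $\OMIN_k(\M_n)\subseteq C(\Omega_k)\otimes\M_k$ and asserts that every UCP map $\M_n\to\M_k$ has the form $V^*(\,\cdot\otimes I_k)V$ with $V:\C^k\to\C^{nk}$ an isometry, which converts the supremum over $\Omega_k$ directly into the supremum over rank-$k$ projections of $\C^{nk}$. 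You instead compute the $\OMIN_k$ norm order-theoretically, via the $2\times 2$ trick and the $k$-block positivity description of $\M_p(\OMIN_k(\M_n))^+$ from \cite{JoKrPaPe}, obtaining the maximum over compressions $(I_m\otimes P_U)X(I_m\otimes P_U)$ with $\dim U\leq k$ in $\C^n$, and then sandwich this between the two projection formulations ($P=P_U\otimes\ketbra{e}{e}$ in one direction, complete contractivity of $\phi_P(Z)=W^*(Z\otimes I_k)W$ on $\OMIN_k(\M_n)$ in the other). This is slightly longer but buys precision at exactly the point where the paper is loose: the parametrization $V^*(\,\cdot\otimes I_k)V$ with $V:\C^k\to\C^{nk}$ describes only the UCP maps of Choi rank at most $k$ (e.g.\ $Z\mapsto\tr(Z)I_k/n$ is not of this form), and the fact that restricting the supremum to this subclass does not change the $\OMIN_k$ norm — the direction actually needed for the corollary's upper bound — is precisely what your block-positivity computation establishes. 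Your Schmidt-rank bookkeeping ($\psi_1,\psi_2\in\C^m\otimes U$ with $\dim U\leq k$) and your argument that $\phi_P$ is completely contractive from $\OMIN_k(\M_n)$ (CP composed with \UkP{} is $k$-positive, and $k$-positive into $\M_k$ is CP) are both sound, though the latter could be shortcut by noting $\phi_P\in\Omega_k$ and appealing to the definition of the $\OMIN_k$ matrix norms.
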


\begin{proof} 
We consider $X$ as an element of $\M_m(\OMIN_k(\M_n))$.
Recall the representation of $\OMIN_k(\M_n)$ as a subset of $C(\Omega_k; \M_k)$, where $\Omega_k = \UCP(\M_n, \M_k)$.
Every UCP map from $\M_n$ into $\M_k$ has the form $\phi(X) = V^*(X \otimes I_k)V$ for some isometry $V: \bb C^k \to \bb C^{nk}$.
Therefore taking $P = VV^*$, an arbitrary projection of rank $k$ in $\M_{nk}$,
\begin{align*}
 \|X\|_{\M_m(\OMIN_k(\M_n))} &= \sup_{\phi \in \UCP(\M_n, \M_k)} \| \phi_m(X) \|
 \\
 &= \sup_{\substack{P = P^2 = P^* \\ P \in \M_{nk}}}  \| (I_m\otimes P)(X\otimes I_k)(I_m\otimes P) \| .
\end{align*}
Therefore, applying $(\id_{\OMIN_k(\M_n)}^{\,\M_n})_m$ to $X$ and using Theorem~\ref{OMINthm}, we obtain
\[
 \|X\| \leq \frac{2n-k}k \sup_{\substack{P = P^2 = P^* \\ P \in \M_{nk}}}  \| (I_m\otimes P)(X\otimes I_k)(I_m\otimes P) \| .
\]

The second statement follows from the fact that the cb-norm of any map $\psi$ with range $\M_n$ is equal to $\| \id_n \otimes \psi \|$, see \cite{Pa2002}.
\end{proof}

Theorem ~\ref{OMINthm} yields an upper bound for $d_k(\M_n)$ as well.

\begin{cor} \label{cor:dk}
For any $k,n \in \bb N$ we have that
\[
 d_k(\M_n) \leq\| \id_{\M_n}^{\,\OMAX_k(\M_n)} \|_{dec} = \frac{2n-k}{k}.
\]
\end{cor}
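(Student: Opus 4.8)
The claim to prove is Corollary~\ref{cor:dk}, namely that $d_k(\M_n) \leq \| \id_{\M_n}^{\,\OMAX_k(\M_n)} \|_{dec} = \frac{2n-k}{k}$. The first inequality is essentially immediate from Proposition~\ref{prop:rkdkViaId}, which identifies $d_k(\M_n) = \| \id_{\M_n}^{\,\OMAX_k(\M_n)} \|_{cb}$, together with the general fact $\| \phi \|_{cb} \leq \| \phi \|_{dec}$ recorded just before Lemma~\ref{lemma:wittstock-covariant}. So the content is the equality $\| \id_{\M_n}^{\,\OMAX_k(\M_n)} \|_{dec} = \frac{2n-k}{k}$.

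\emph{Key steps.} The idea is to exhibit a duality between $\OMAX_k(\M_n)$ and $\OMIN_k(\M_n)$ and transfer the computation from Theorem~\ref{OMINthm}. Concretely: (i) Recall that $\OMAX_k$ and $\OMIN_k$ are dual operator system constructions, so that the natural pairing on $\M_n \times \M_n$ given by $\langle X, Y\rangle = \tr(XY)$ (or $\tr(X^*Y)$, suitably normalized so the identity is self-dual) implements a complete order isomorphism between $\OMAX_k(\M_n)$ and the dual of $\OMIN_k(\M_n)$; this is standard from Xhabli's work / Kavruk~\cite{Ka}. (ii) Under this duality, the identity map $\id_{\M_n}^{\,\OMAX_k(\M_n)}$ is the adjoint of $\id_{\OMIN_k(\M_n)}^{\,\M_n}$. (iii) The decomposition norm is invariant under taking adjoints with respect to a self-dual order unit: if $\phi = \phi^+ - \phi^-$ with $\phi^\pm$ CP, then the adjoints $(\phi^\pm)^*$ are CP as well, and since the order unit is self-dual one checks $\|(\phi^+)^*(I) + (\phi^-)^*(I)\| = \|\phi^+(I) + \phi^-(I)\|$ (both equal a trace of the relevant positive element). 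Hence $\| \id_{\M_n}^{\,\OMAX_k(\M_n)} \|_{dec} = \| \id_{\OMIN_k(\M_n)}^{\,\M_n} \|_{dec}$, which equals $\frac{2n-k}{k}$ by Theorem~\ref{OMINthm}. For the case $k = n$ one separately notes $\OMAX_n(\M_n) = \M_n$ so the norm is $1 = \frac{2n-n}{n}$, consistent with the formula.

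Alternatively, if one prefers to avoid invoking operator system duality, one can mimic the proof of Theorem~\ref{OMINthm} directly: by property (4) of $\OMAX_k$, covariance reductions apply, and the covariant CP maps \emph{from} $\M_n$ \emph{into} the spaces dual to $\OMIN_k$ are again parametrized by the set $\Gamma$ from Kye~\cite{Kye}*{Theorem~1.7.3}, since the $k$-PEB condition is self-dual for covariant maps $\phi_{s,t}$. Running the same linear program with the same constraints (1)--(3) yields the same optimum $\frac{2n-k}{k}$. This is the route the authors' remark ``the second is similar'' in Proposition~\ref{prop:rkdkViaId} suggests is available throughout.

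\emph{Main obstacle.} The delicate point is verifying that the decomposition norm really is preserved under the duality/adjoint operation — i.e.\ that a CP decomposition of $\id_{\OMIN_k(\M_n)}^{\,\M_n}$ dualizes to a CP decomposition of $\id_{\M_n}^{\,\OMAX_k(\M_n)}$ with the \emph{same} value of $\|\phi^+(I) + \phi^-(I)\|$, and conversely. This requires being careful about (a) which normalization of the trace pairing makes $I_n$ self-dual, (b) that the adjoint of a CP map between finite-dimensional operator systems is CP for the dual matrix orders (true, and standard), and (c) that unitality of the identity map on one side corresponds to trace-preservation on the other, so the quantities $\phi^\pm(I)$ transform as claimed. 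Once these bookkeeping points are pinned down, the result is immediate from Theorem~\ref{OMINthm}.
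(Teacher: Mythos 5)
Your first inequality is fine: $d_k(\M_n)=\|\id_{\M_n}^{\,\OMAX_k(\M_n)}\|_{cb}\leq\|\id_{\M_n}^{\,\OMAX_k(\M_n)}\|_{dec}$ follows from Proposition~\ref{prop:rkdkViaId} and $\|\cdot\|_{cb}\leq\|\cdot\|_{dec}$, and your steps (i)--(ii) (the duality $\OMIN_k(\M_n)^d\cong\OMAX_k(\M_n)$ and the fact that the identity is its own trace-pairing adjoint) are also correct. The genuine gap is step (iii): the decomposition norm is \emph{not} invariant under taking trace-pairing adjoints, and the justification you sketch (``both equal a trace of the relevant positive element'') is not right. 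The objective $\|\phi^+(I)+\phi^-(I)\|$ is an operator norm of the value at the unit; under the adjoint, unitality turns into trace preservation, so the objective dualizes to a trace-norm-type quantity, exactly as $\|\Phi\|_{cb}$ dualizes to $\|\Phi^*\|_{\diamond}$ rather than to $\|\Phi^*\|_{cb}$. A concrete counterexample to your claimed invariance, already with the standard orders on $\M_n$ (where the unit is self-dual in your sense): $\phi(X)=\tr(X)E_{11}$ is CP, so $\|\phi\|_{dec}=\|\phi(I_n)\|=n$, while its adjoint $\phi^*(Y)=Y_{11}I_n$ is CP with $\|\phi^*\|_{dec}=\|\phi^*(I_n)\|=1$. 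So dualizing a CP decomposition does not preserve the value of the objective, and an argument is genuinely missing, not merely bookkeeping.

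The correct bridge --- and the one the paper uses --- avoids adjoints entirely. By Proposition~\ref{k-PEBprop} (with $m=n$), a linear map on $\M_n$ is CP as a map $\M_n\to\OMAX_k(\M_n)$ if and only if the \emph{same} map is CP as a map $\OMIN_k(\M_n)\to\M_n$ (both classes being the $k$-PEB maps). Hence every admissible decomposition $\id=\phi^+-\phi^-$ for $\id_{\M_n}^{\,\OMAX_k(\M_n)}$ is, with the very same maps $\phi^{\pm}$, admissible for $\id_{\OMIN_k(\M_n)}^{\,\M_n}$ and conversely, and the quantity $\|\phi^+(I_n)+\phi^-(I_n)\|$ is literally unchanged (it is the same positive matrix, and the level-one cones, hence the norms of self-adjoint elements, of $\OMAX_k(\M_n)$ and $\M_n$ agree). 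This gives $\|\id_{\M_n}^{\,\OMAX_k(\M_n)}\|_{dec}=\|\id_{\OMIN_k(\M_n)}^{\,\M_n}\|_{dec}=\frac{2n-k}{k}$ by Theorem~\ref{OMINthm}. Your fallback suggestion of rerunning the covariant linear program on the $\OMAX_k$ side could be made to work, but it would require re-proving the averaging reduction of Lemma~\ref{lemma:wittstock-covariant} with range $\OMAX_k(\M_n)$, and it becomes unnecessary once Proposition~\ref{k-PEBprop} is in hand.
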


\begin{proof} 
By Proposition \ref{prop:rkdkViaId} and the definition of the decomposition norm, we have
\[
 d_k(\M_n) = \| \id_{\M_n}^{\,\OMAX_k(\M_n)} \|_{cb}\leq \| \id_{\M_n}^{\,\OMAX_k(\M_n)} \|_{dec} . 
\]
By Proposition~\ref{k-PEBprop}, the CP maps from $\M_n$ into $\OMAX_k(\M_n)$ coincide with the CP maps from $\OMIN_k(\M_n)$ into $M_n$ and hence we have
\[
\| \id_{\M_n}^{\,\OMAX_k(\M_n)} \|_{dec} = \| \id_{\OMIN_k(\M_n)}^{\,\M_n} \|_{dec} = \frac{2n-k}{k},
\]
where the final equality follows from Theorem~\ref{OMINthm}.
 \end{proof}
We get an immediate corollary for unital $k$-positive maps between matrix algebras having $\M_n$ as domain:

\begin{cor}
Let $\cl T$ be an operator system and let $\phi: \M_n\rightarrow \cl T$ be a unital and $k$-positive map. Then 
\[\|\phi\|_{cb}\leq \frac{2n-k}{k}.\]
\end{cor}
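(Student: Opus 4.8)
The plan is to obtain this statement as an immediate consequence of Corollary~\ref{cor:dk}, simply by unwinding the definition of the parameter $d_k(\M_n)$. By definition, $d_k(\M_n)$ is the supremum of $\|\psi\|_{cb}$ taken over all operator systems $\cl T$ and all {\UkP} maps $\psi: \M_n \to \cl T$. Since the given map $\phi: \M_n \to \cl T$ is {\UkP} by hypothesis, it is one of the maps over which this supremum is taken, so $\|\phi\|_{cb} \leq d_k(\M_n)$. Corollary~\ref{cor:dk} then bounds the right-hand side by $\tfrac{2n-k}{k}$, which is exactly the asserted inequality.

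There is no real obstacle left at this point, as all of the work has already been done upstream. The substantive input is Theorem~\ref{OMINthm}, whose value $r_k(\M_n) = \tfrac{2n-k}{k}$ is obtained by combining Proposition~\ref{prop:rkdkViaId} (which identifies $r_k(\M_n)$ with the cb norm of the identity map $\OMIN_k(\M_n) \to \M_n$), Wittstock's theorem (to replace $\|\cdot\|_{cb}$ by $\|\cdot\|_{dec}$ using injectivity of $\M_n$), the covariant averaging of Lemma~\ref{lemma:wittstock-covariant}, and the explicit parametrization of covariant PEB maps; Corollary~\ref{cor:dk} then transports this value to $d_k(\M_n)$ via the coincidence of the CP maps $\M_n \to \OMAX_k(\M_n)$ with the CP maps $\OMIN_k(\M_n) \to \M_n$.

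If one prefers a self-contained argument not routed through the parameter $d_k$, the alternative is to invoke universal property~(4) of $\OMAX_k$: the map $\psi := \phi \circ \id_{\OMAX_k(\M_n)}^{\,\M_n} : \OMAX_k(\M_n) \to \cl T$ is UCP, hence completely contractive, and since setwise $\phi = \psi \circ \id_{\M_n}^{\,\OMAX_k(\M_n)}$ we get $\|\phi\|_{cb} \leq \|\id_{\M_n}^{\,\OMAX_k(\M_n)}\|_{cb} \leq \|\id_{\M_n}^{\,\OMAX_k(\M_n)}\|_{dec} = \tfrac{2n-k}{k}$, the last equality again being Corollary~\ref{cor:dk}. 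This is, however, essentially a repetition of the proof of Proposition~\ref{prop:rkdkViaId}, so routing through $d_k(\M_n)$ is the cleaner presentation.
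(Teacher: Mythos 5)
Your proof is correct and matches the paper's intent exactly: the paper presents this statement as an immediate consequence of Corollary~\ref{cor:dk}, obtained precisely by noting that any {\UkP} map $\phi:\M_n\to\cl T$ is among the maps in the supremum defining $d_k(\M_n)$, so $\|\phi\|_{cb}\leq d_k(\M_n)\leq \frac{2n-k}{k}$. Your alternative route through the universal property of $\OMAX_k$ is fine but, as you say, just re-derives Proposition~\ref{prop:rkdkViaId}.
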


It is a natural question whether $d_k(\M_n) = \frac{2n-k}{k}$ holds in general. We can show that for $k=1$ this is not the case:

\begin{thm}\label{thm:d_1ofM_n}
For every $n\geq 2$, we have $d_1(\M_n)=n$.
\end{thm}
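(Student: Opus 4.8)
The plan is to establish the two inequalities $d_1(\M_n) \geq n$ and $d_1(\M_n) \leq n$ separately. For the lower bound, I would use the transpose map $\phi(X) = X^T$ on $\M_n$, which is unital and positive but not completely positive; the content is to show $\|\phi\|_{cb} = \| \id_{\M_n}^{\,\OMAX_1(\M_n)} \|_{cb} \geq n$ by exhibiting a suitable test element. Concretely, one takes $X \in \M_n(\M_n)$ built from matrix units, e.g.\ $X = \sum_{i,j} E_{ij} \otimes E_{ij}$ (a rank-one positive element, a multiple of a maximally entangled state up to the flip), for which $(\id_n \otimes \phi)(X) = \sum_{i,j} E_{ij} \otimes E_{ji}$ is the swap operator, and compare operator norms: $\|X\| = n$ while the swap has norm $1$, giving $\|\phi\|_{cb} \geq \|\id_n\otimes\phi\|\geq n$. (One must be slightly careful about which convention makes $X$ positive versus makes the image the flip; swapping the roles of $\phi$ and $\id_n$ if needed.)

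For the upper bound $d_1(\M_n) \leq n$, the natural route via Corollary~\ref{cor:dk} only gives $d_1(\M_n) \leq 2n-1$, which is too weak, so a separate argument is needed. I would instead argue directly: any U1P (i.e.\ unital positive) map $\phi: \M_n \to \cl T$ can, by injectivity of $\cl B(H)$ and Arveson-type reasoning, be analyzed through $\| \id_{\M_n}^{\,\OMAX_1(\M_n)} \|_{dec}$ restricted appropriately, but the key improvement is that on $\M_n$ a \emph{positive unital} map automatically satisfies $\|\phi\| = \|\phi(I)\| = 1$ (positive maps on C*-algebras are bounded with $\|\phi\| = \|\phi(1)\|$), and then the trivial bound $\|\phi\|_{cb} \leq n\|\phi\| = n$ from \eqref{eq:trivial-bound} applies directly. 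Actually this immediately gives $d_1(\M_n)\leq n$ since $\M_n$ is a C*-algebra, so the domain being a C*-algebra is exactly the setting of the $\sup = n$ statement quoted in the introduction.

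Thus the cleanest organization is: (i) cite or reprove that a unital positive map on a C*-algebra has norm $1$ (Russo--Dye / standard), (ii) apply \eqref{eq:trivial-bound} with the domain $\M_n$ to get $\|\phi\|_{cb} \leq n$ for every U1P map $\phi: \M_n \to \cl T$, hence $d_1(\M_n) \leq n$; (iii) compute $\|\phi\|_{cb} \geq n$ for the transpose via the maximally entangled test matrix as above, hence $d_1(\M_n) \geq n$. Combining, $d_1(\M_n) = n$.

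I expect the main obstacle to be purely bookkeeping: making sure the upper bound argument is valid in the generality of an arbitrary operator system range $\cl T$ (not just a matrix algebra) — one should either note that it suffices to take $\cl T = \cl B(H)$ as remarked in the introduction, or observe that the inequality $\|\id_m \otimes \phi\| \leq m\|\phi\|$ and $\|\phi\| = \|\phi(1)\| = 1$ hold regardless of $\cl T$. The lower bound computation is routine once the correct test element and convention are fixed, and one should double-check that $\|\sum_{ij} E_{ij}\otimes E_{ij}\| = n$ and that its image under $\id_n \otimes (\cdot)^T$ is the flip operator of norm $1$.
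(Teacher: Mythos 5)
Your lower bound is fine: the transpose with the maximally entangled test element gives $\|\phi\|_{cb}\geq n$, which is exactly how the paper (implicitly) gets $d_1(\M_n)\geq n$. The upper bound, however, has a genuine gap. The estimate \eqref{eq:trivial-bound} reads $\|\phi\|_{cb}=\|\id_n\otimes\phi\|\leq n\|\phi\|$ for maps $\phi:\cl T\to\M_n$; the first equality is Smith's lemma, which requires the \emph{range}, not the domain, to be $\M_n$. For a unital positive map $\phi:\M_n\to\cl T$ the range is arbitrary, so you cannot stop at level $n$: $\|\phi\|_{cb}=\sup_m\|\id_m\otimes\phi\|$, and your fallback observation that $\|\id_m\otimes\phi\|\leq m\|\phi\|$ for every $m$ gives nothing after taking the supremum over $m$. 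Replacing $\cl T$ by $\cl B(H)$ does not help either: Section~\ref{S:cb not attained} of the paper is devoted precisely to the fact that for $n\geq 3$ there is no finite $m$ with $\|\psi\|_{cb}=\|\id_m\otimes\psi\|$ for all $\psi:\M_n\to\cl B(H)$. So from $\|\phi\|=\|\phi(I)\|=1$ (Russo--Dye) alone you have not bounded $\|\phi\|_{cb}$ at all; the inequality $\|\phi\|_{cb}\leq n\|\phi\|$ for maps \emph{with domain} $\M_n$ and arbitrary range is exactly what has to be proved, it is not quoted anywhere in the paper, and elementary row/column (Cauchy--Schwarz) estimates only yield a bound of order $n^{3/2}\|\phi\|$.

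What is missing is an argument that uses positivity uniformly in the matrix level, and this is where the paper's proof differs essentially from your plan. The paper invokes Ando's theorem: for $x\in\M_m(\M_n)$ with $\|x\|\leq 1$, the operator $\begin{pmatrix} I_m\otimes I_n & \tfrac1n x\\ \tfrac1n x^* & I_m\otimes I_n\end{pmatrix}$ is \emph{separable} in $\M_{2m}\otimes\M_n$; since a merely positive unital $\phi$ preserves positivity of separable elements under $\id_{2m}\otimes\phi$, one gets that $\begin{pmatrix} I & \tfrac1n\phi_m(x)\\ \tfrac1n\phi_m(x)^* & I\end{pmatrix}$ is positive, hence $\|(\id_m\otimes\phi)(x)\|\leq n$ for every $m$ simultaneously, i.e.\ $\|\phi\|_{cb}\leq n$. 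Some input of this kind (beyond $\|\phi\|=1$) is unavoidable, so your step (ii) needs to be replaced rather than just bookkept.
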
 

\begin{proof}
Note that $d_1(\M_n)\geq n$ since the transpose map $\phi:\M_n\rightarrow \M_n$ given by $\phi(X)=X^T$ in any fixed basis is positive and satisfies $\|\phi\|_{cb}=n$. For the other direction, let $\cl T$ be any operator system, and $\phi : \M_n \to \cl T$ be a unital positive map. Fix an integer $m \geq 1$ and an element $x \in \M_m(\M_n)$ such that $\|x\| \leq 1$. By \cite{ando}*{Corollary~8.4}, the operator
\[
\begin{pmatrix} I_m\otimes I_n & \frac{1}{n}x \\
\frac{1}{n}x^* & I_m\otimes I_n\end{pmatrix} \in \M_{2m}(\M_n)
\]
is separable, i.e., is a positive combination of elements of the form $y \otimes z$ for $y \in \M_{2m}^+$ and $z \in \M_n^+$. Since $\phi$ is unital and positive, the operator
\[ (\id_{2m} \otimes \phi) \begin{pmatrix} I_m\otimes I_n & \frac{1}{n}x \\
\frac{1}{n}x^* & I_m\otimes I_n\end{pmatrix} = 
\begin{pmatrix} I_m\otimes I_{\cl T} & \frac{1}{n} \phi(x) \\
\frac{1}{n} \phi(x)^* & I_m\otimes I_{\cl T}\end{pmatrix}
\]
is positive, where $I_{\cl T}$ is the identity element in $\cl T$.
Thus it follows that $\|\phi(x)\| \leq n$, and hence we have  $\|\phi\|_{cb} \leq n$ and $d_1(\M_n) \leq n$.
\end{proof}

%

The previous theorem shows that the equality $d_k(\M_n) = \frac{2n-k}{k}$ does not always hold. From the proof of Corollary \ref{cor:dk} it holds if and only if 
 \[ 
  \| \id_{\M_n}^{\, \OMAX_k(\M_n)} \|_{cb} = \| \id_{\M_n}^{\, \OMAX_k(\M_n)} \|_{dec}, 
 \]
 and hence for particular values of $n$ and $k$ the equality would follow if $\OMAX_k(\M_n)$ was an injective operator system. Unfortunately, this is never the case when $k<n$:

\begin{prop} 
For any $n,k \in \bb N$ with $k <n$,  $\OMAX_k(\M_n)$ is not an injective operator system.
\end{prop}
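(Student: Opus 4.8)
The plan is to argue by contradiction. Suppose $\OMAX_k(\M_n)$ were injective while $k<n$. Being finite dimensional, it would then be completely order isomorphic to a finite dimensional C*-algebra, i.e.\ to a direct sum $\bigoplus_{i=1}^{\ell}\M_{m_i}$ with $\sum_{i=1}^{\ell}m_i^2=n^2$, by the Choi--Effros structure theorem for injective operator systems (see \cite{CE}). Now the order-unit space underlying $\OMAX_k(\M_n)$ is just $(\M_n^{sa},\M_n^+)$ --- the first-level positive cone of $\OMAX_k(\M_n)$ equals $\M_n^+$ because $\id_{\M_n}^{\,\OMAX_k(\M_n)}$ is $k$-positive and hence positive. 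Restricting a unital complete order isomorphism to the first matrix level therefore yields a unital order isomorphism of $(\M_n^{sa},\M_n^+)$ onto $(\bigoplus_i\M_{m_i}^{sa},\bigoplus_i\M_{m_i}^+)$. Such a map carries the self-adjoint unit ball $[-I,I]$ onto the self-adjoint unit ball, so it is a surjective linear isometry, hence a Jordan $*$-isomorphism by Kadison's theorem; comparing centres (that of $\M_n$ being one dimensional) then forces $\ell=1$ and $m_1=n$. Thus the hypothesis produces a unital complete order isomorphism $T\colon\OMAX_k(\M_n)\to\M_n$, with $T$ and $T^{-1}$ both completely positive.

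The heart of the argument is to contradict this. Set $q=\id_{\OMAX_k(\M_n)}^{\,\M_n}$, which is UCP, and $R=q\circ T^{-1}\colon\M_n\to\M_n$. Then $R$ is a unital completely positive bijection whose inverse $R^{-1}=T\circ\id_{\M_n}^{\,\OMAX_k(\M_n)}$ is $k$-positive, being the composition of the completely positive map $T$ with the $k$-positive map $\id_{\M_n}^{\,\OMAX_k(\M_n)}$. Consequently $\id_k\otimes R$ and $\id_k\otimes R^{-1}$ are mutually inverse, unital, positive self-maps of $\M_{nk}$; as above this makes $\id_k\otimes R$ a surjective linear isometry of $\M_{nk}^{sa}$ and hence, by Kadison's theorem, a Jordan $*$-automorphism of $\M_{nk}$ --- that is, $\Ad_W$ or $\Ad_W$ composed with a transpose, for some unitary $W\in\M_{nk}$. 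In either case a direct computation shows that $(\id_k\otimes R)^2=\id_k\otimes R^2$ is a genuine $*$-automorphism of $\M_{nk}$, so that $R^2$ and $R^{-2}$ are completely positive --- here one uses that if $\id_k\otimes\Phi$ is completely positive then so is $\Phi$ (restrict to corners). Therefore $R^{-1}=R\circ R^{-2}$ is completely positive, and then so is $\id_{\M_n}^{\,\OMAX_k(\M_n)}=T^{-1}\circ R^{-1}$.

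This gives the contradiction: $\id_{\M_n}^{\,\OMAX_k(\M_n)}$ being completely positive would mean $\OMAX_k(\M_n)=\M_n$ as operator systems, whereas for $k<n$ this fails --- for instance the map $\tau_{n,k}$ of \eqref{equ:TomiyamaEx} is unital and $k$-positive with $\|\tau_{n,k}\|_{cb}=1+\tfrac{2(n-k)}{n(nk-1)}>1$, so it is not completely positive.

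I expect the main obstacle to be the second paragraph: it is not obvious how to turn the (hypothetical) isomorphism $\OMAX_k(\M_n)\cong\M_n$ into an actual contradiction, since a priori a unital completely positive bijection of $\M_n$ with merely $k$-positive inverse need not have a completely positive inverse. The device that resolves this is to tensor with $\id_k$ so as to land among the visibly reversible Jordan automorphisms, and then to square in order to absorb a possible transpose and recover complete positivity of $R^{-2}$, hence of $R^{-1}$. By contrast, the Choi--Effros reduction of the first paragraph and the norm estimate at the end are routine.
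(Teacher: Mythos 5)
Your proof is correct, and while it shares the paper's skeleton---the Choi--Effros reduction of an injective finite-dimensional operator system to a direct sum of matrix algebras, and the final contradiction from the existence of unital $k$-positive non-CP maps on $\M_n$ such as \eqref{equ:TomiyamaEx}---the core mechanism is genuinely different. The paper forces each summand to satisfy $n_j\le k$ by invoking the fact that every $k$-positive map with domain $\OMAX_k(\M_n)$ is completely positive, and then uses Choi's theorem (a $k$-positive map into $\M_{n_j}$ with $n_j\le k$ is CP) to conclude directly that $\id_{\M_n}^{\,\OMAX_k(\M_n)}$ is CP. You instead pin the C*-algebra down to $\M_n$ itself by a first-level Kadison argument, and then analyze the resulting unital CP bijection $R$ with $k$-positive inverse by tensoring with $\id_k$, applying Kadison again to get a Jordan automorphism of $\M_{nk}$, and squaring to absorb a possible transpose. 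What your route buys is that it uses only the universal properties of $\OMAX_k$ listed in Section~\ref{sec: background}, Kadison's theorem, and the classification of Jordan automorphisms of full matrix algebras; in particular it avoids the assertion (true, but not among the listed properties and not proved in the paper) that $k$-positive maps out of $\OMAX_k(\M_n)$ are CP. The cost is length. Two minor points: first, at the first level the statement you really need is Kadison's theorem that a unital linear order isomorphism between C*-algebras is a Jordan $*$-isomorphism; quoting the isometry version for the self-adjoint parts alone is a slight mismatch of hypotheses, though the conclusion is the same. Second, the squaring step can be skipped: since $R$ is CP, $\id_k\otimes R$ is $2$-positive, whereas a Jordan automorphism of $\M_{nk}$ of the form $\Ad_W$ composed with the transpose is not $2$-positive when $nk\ge 2$; hence $\id_k\otimes R=\Ad_W$, so $\id_k\otimes R^{-1}=\Ad_{W^*}$ is already CP and $R^{-1}$ is CP without any squaring.
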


 \begin{proof} 
 By a result of Choi--Effros \cite{CE}, every injective operator system is unitally completely order isomorphic to a unital C*-algebra.  
 Every finite dimensional C*-algebra is a direct sum of matrix algebras. 
Thus, up to unital complete order isomorphism, we would have that $\OMAX_k(\M_n) = \oplus_j \M_{n_j}.$
 
Since every $k$-positive map with domain $\OMAX_k(\M_n)$ is CP, we require that for each summand, $n_j \leq k$.  
But then since $\id: \M_n \to \OMAX_k(\M_n) = \oplus_j \M_{n_j}$ is $k$-positive, we would have that it is CP.   
This implies that the identity map is a unital complete order isomorphism between $\M_n$ and $\OMAX_k(\M_n)$, 
which in turn implies that every $k$-positive map with domain $\M_n$ is completely positive. However, in the literature there are many examples of $k$-positive maps with domain $\M_n$ that are not CP; for example the map in \eqref{equ:TomiyamaEx}. This contradiction completes the proof. 
 \end{proof}

\section{A probabilistic lower bound on \texorpdfstring{$d_k(\M_n)$}{dk(Mn)}}

We start this section by noting that results about maps $\Phi : \M_m \to \M_n$ between matrix algebras have an immediate dual translation involving the adjoint map $\Phi^* : \M_n \to \M_m$, 
with respect to the Hilbert-Schmidt inner product
\[\langle A, B\rangle =\tr(A^*B),\]
for $A, B$ in $\M_m$ or in $\M_n$. This translation is obtained using the equivalences
\begin{gather*}
 \Phi \textnormal{ is } k\textnormal{-positive} \iff \Phi^* \textnormal{ is } k\textnormal{-positive}, \\
 \Phi \textnormal{ is unital} \iff \Phi^* \textnormal{ is trace-preserving}, \\
 \|\Phi\|_{cb} = \|\Phi^*\|_{\diamond} , 
\end{gather*}
where $\|\cdot\|_{\diamond}$ is the completely bounded trace norm, often called the diamond norm in quantum information, and is defined as follows for any linear map $\Psi$ acting on $\M_n$:
\[\|\Psi\|_{\diamond}=\sup_{k\geq 1} \ \sup \{\|(\id_k\otimes \Psi)(X)\|_1: X\in \M_k\otimes \M_n, \|X\|_1\leq 1\}. \]

We now turn our attention to obtaining lower bounds on $d_k(\M_n)$.  Surprisingly, it seems to be difficult to find concrete examples of unital $k$-positive maps for $k \geq 2$ on a matrix algebra with large cb-norm. A review of the literature yields no examples of such a map with cb-norm larger than 2. To obtain a better lower bound, we use the probabilistic method. We prove the following theorem.

\begin{thm} \label{theorem:probabilistic-bound}
There exists a constant $c>0$ with the following property: for every integer $n \geq 1$ and $1 \leq k \leq n$, 
there is a $k$-positive unital map $\Phi : \M_n \to \M_n$ such that $\|\Phi\|_{cb} \geq c \sqrt{n/k}$. 
In particular, we have
\[
 d_k(\M_n) \geq c \sqrt{n/k}. 
\]
\end{thm}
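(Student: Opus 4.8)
The plan is to use the probabilistic method by considering a random unital map $\Phi : \M_n \to \M_n$ and showing that with positive probability it is simultaneously $k$-positive and has large cb norm. Concretely, I would take $\Phi$ of the form $\Phi(X) = \frac{\tr X}{n} I_n - \epsilon \Psi_0(X)$ (a small perturbation of the completely depolarizing channel in the ``traceless'' direction), where $\Psi_0$ is a random traceless self-adjoint-preserving map and $\epsilon > 0$ is a parameter to be tuned. A natural model for $\Psi_0$ is to take its Choi matrix to be (the traceless part of) a GUE random matrix, or equivalently to let $\Phi(X) = \frac{\tr X}{n}I_n - \epsilon \, (G \odot X + \cdots)$ built from a Gaussian random Hermitian matrix $G$ on $\C^n \otimes \C^n$. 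The point is that depolarizing by a small amount keeps the map $k$-positive as long as the perturbation is small in an appropriate (operator) norm on the relevant matrix amplifications, while the cb norm, being governed by a trace/1-norm dual quantity, picks up a much larger value because the random perturbation is ``spread out.''

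The key steps, in order, are as follows. First, translate to the dual picture via $\|\Phi\|_{cb} = \|\Phi^*\|_\diamond$ and unital $\leftrightarrow$ trace-preserving, $k$-positive $\leftrightarrow$ $k$-positive, so that it suffices to produce a trace-preserving $k$-positive $\Phi^*$ with large diamond norm; this is typically the more convenient side because the diamond norm is a supremum of $1$-norms and random matrices have large $1$-norm. Second, set up the random model and compute the two relevant quantities. For $k$-positivity, I would use the characterization that $\Phi$ is $k$-positive iff $(\id_k \otimes \Phi)$ preserves positivity, which for the depolarizing-plus-perturbation form reduces to an estimate of the type ``$\|(\id_k\otimes \Psi_0)\| \lesssim$ something,'' and I would bound the relevant norm of the random perturbation on $k$-dimensional ancillas using standard operator-norm bounds for Gaussian random matrices (an $\epsilon$-net argument over isometries $V : \C^k \to \C^{nk}$, giving a bound of order $\sqrt{nk}$ up to constants and logs, actually one wants the sharper $\sqrt{n} + \sqrt{k}$-type scaling to get the clean $\sqrt{n/k}$). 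Third, lower-bound the diamond norm of the perturbation part: feeding in a maximally entangled state on $\C^n\otimes\C^n$, the output $1$-norm of $(\id_n\otimes\Psi_0)$ applied to it is, for a Gaussian Choi matrix, of order $n^{3/2}$ (the $1$-norm of an $n^2\times n^2$-ish Gaussian Hermitian block), while the ``trivial'' part contributes only $O(1)$ after normalization; balancing these against the $k$-positivity constraint $\epsilon \lesssim 1/\sqrt{nk}$ yields a diamond norm of order $\epsilon \cdot n^{3/2} / n \sim n^{1/2} \cdot \epsilon \cdot n^{1/2} \sim \sqrt{n/k}$ after the bookkeeping. Fourth, invoke concentration (Gaussian concentration of measure for Lipschitz functions of the Gaussian entries, applied to both the operator-norm functional controlling $k$-positivity and the $1$-norm functional controlling the diamond norm) to conclude that both good events hold simultaneously with probability close to $1$, hence a deterministic such $\Phi$ exists. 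Finally, $d_k(\M_n) \geq \|\Phi\|_{cb} \geq c\sqrt{n/k}$ is immediate from the definition.

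I expect the main obstacle to be getting the \emph{right} dependence on $k$ in the $k$-positivity estimate, i.e.\ showing that the random perturbation can be taken of size $\epsilon \asymp 1/\sqrt{nk}$ rather than something worse like $1/\sqrt{n\cdot k}$ with extra logarithmic factors or $1/n$. This requires a careful net argument over the set of rank-$k$ projections (or isometries $\C^k \to \C^{nk}$) combined with a union bound, and one must ensure the net size and the Gaussian tail bound combine to give exactly $\sqrt{nk}$ up to an absolute constant; losing a $\sqrt{\log n}$ here would still give a slightly weaker bound, so some care (e.g.\ using a chaining/Dudley-type argument or a direct spectral estimate on the family of partial traces) may be needed. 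The diamond-norm lower bound, by contrast, is comparatively robust since one only needs a single good test state. A secondary technical point is confirming that the depolarizing ``background'' really does not spoil the lower bound — that the $1$-norm of the perturbation genuinely survives after subtracting the rank-one trace part — but this follows because the Gaussian perturbation is orthogonal (in Hilbert–Schmidt) to the identity direction by construction, so its contribution to the output $1$-norm adds rather than cancels.
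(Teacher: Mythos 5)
Your proposal is correct in strategy, but it takes a genuinely different route from the paper. The paper constructs no explicit random map: it compares Gaussian mean widths of two convex bodies, namely the set $P_{n,k}$ of $k$-positive trace-preserving maps and the unit ball $B_\diamond$ of the diamond norm. The lower bound $w_G(P_{n,k}) \gtrsim n^{5/2}/\sqrt{k}$ is imported from the volume-radius estimate $\vrad(P_{n,k}) = \Theta(\sqrt{n/k})$ of \cite{SWZ10} together with the inequality $\vrad(K) \lesssim w_G(K)/\sqrt{N}$, while the upper bound $w_G(B_\diamond) \leq 2n^2$ comes from $\|\Psi\|_\diamond \geq \frac{1}{n}\|\alpha(\Psi)\|_1$ and the expected operator norm of a GUE matrix; comparing the two widths forces some $\Psi \in P_{n,k}$ to have $\|\Psi\|_\diamond \gtrsim \sqrt{n/k}$, and dualizing gives the theorem. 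You instead exhibit a concrete random witness (depolarizing map minus a small Gaussian perturbation) and verify $k$-positivity and the cb/diamond-norm lower bound directly. Your route is more self-contained --- it essentially re-derives the piece of the \cite{SWZ10} estimate that the paper cites, which is indeed how such inradius/volume bounds are proved --- and it produces an explicit random family; the paper's route is shorter given the literature and avoids any net/chaining argument of its own. Both ultimately rest on the same two Gaussian computations: the width of the set of Schmidt-rank-$\leq k$ test vectors and the trace norm (respectively operator norm) of a GUE matrix.

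Two points need tightening if you carry this out. First, run the $k$-positivity step through block-positivity of the Choi matrix: with $C_\Phi = \frac{1}{n} I\otimes I - \epsilon G$, $k$-positivity is exactly $\epsilon \sup\{\langle \xi, G\xi\rangle : \|\xi\|=1,\ \mathrm{SR}(\xi)\leq k\} \leq \frac{1}{n}$; the formulation via ``$\|(\id_k\otimes\Psi_0)\|\lesssim$ something'' is not quite right because the depolarizing part contributes $\rho_A \otimes \frac{1}{n}I$, which is not uniformly bounded below over mixed inputs. The required estimate is $\E \sup \lesssim \sigma\sqrt{nk}$ (where $\sigma$ is the entry scale of $G$), and it does hold without logarithmic loss by Dudley's entropy bound for the Gaussian process $\xi \mapsto \langle\xi, G\xi\rangle$ over the Schmidt-rank-$\leq k$ sphere (entropy exponent of order $nk$), as you anticipated; a crude net plus union bound loses a $\sqrt{\log(n/k)}$ and weakens the conclusion, and the ``$\sqrt{n}+\sqrt{k}$'' scaling you mention is not the relevant form. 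Second, your bookkeeping mixes normalizations: with entries of variance $\sigma^2$ one has $\|G\|_1 \approx \sigma n^3$, the $k$-positivity constraint reads $\epsilon\sigma\sqrt{nk} \lesssim 1/n$, and the gain is $\frac{1}{n}\epsilon\|G\|_1 \approx \epsilon\sigma n^2 \approx \sqrt{n/k}$; only the product $\epsilon\sigma$ matters, so the slips are harmless, but as written ``$\epsilon \lesssim 1/\sqrt{nk}$'' and ``output $1$-norm of order $n^{3/2}$'' do not correspond to one consistent choice of $\sigma$. Finally, project $G$ onto the subspace making the perturbation exactly unital (or trace-annihilating on the dual side) --- a codimension-$n^2$ projection that does not affect the estimates --- and use the trivial bound $\|\Phi\|_{cb}\geq 1$ to absorb the regime where $n/k$ is bounded.
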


The proof uses the probabilistic method through the concept of Gaussian mean width. 
Let $E$ be a finite-dimensional real Euclidean space. A standard Gaussian vector in $E$ is a random variable $\Gamma$, 
taking values in $E$ and such that, for every orthonormal basis $(e_1,\dots,e_n)$ of $E$, 
the random variables $(\langle \Gamma,e_i \rangle)_i$ are i.i.d.\ with an $N(0,1)$ distribution.
If $K \subset E$ is a bounded subset, we define its \emph{Gaussian mean width} as $w_G(K) = \E \sup_{x \in K} \langle \Gamma, x \rangle$. 
The Gaussian mean width is intrinsic: if $F$ is a subspace of $E$ and $K \subset F$, 
we may equivalently compute $w_G(K)$ as a subset of $E$ or as a subset of $F$.

The real vector space $\M_n^{sa}$ of $n \times n$ self-adjoint matrices with complex entries is equipped with the Euclidean structure induced by the Hilbert--Schmidt inner product $\langle A,B \rangle = \tr A^*B$.

\begin{proof}
We show the dual statement: there is a $k$-positive trace-preserving map $\Psi : \M_n \to \M_n$ such that $\|\Psi\|_{\diamond} \geq c \sqrt{n/k}$. 
The theorem then follows by taking $\Phi = \Psi^*$.

Consider the set
\[
 P_{n,k} = \{ \Psi : \M_n \to \M_n \ k\textnormal{-positive and trace-preserving} \} .
\]
Also let $B_\diamond$ be the unit ball for the normed space  $(\cl B(\M_n^{sa}),\| \cdot\|_\diamond)$. 
We claim that, for some constants $c_1>0$ 
\begin{equation} \label{eq1}
 w_G(P_{n,k}) \geq c_1 n^{5/2}/\sqrt{k} 
\end{equation}
and
\begin{equation} \label{eq2}
 w_G(B_\diamond)) \leq 2 n^2. 
\end{equation}
The conclusion is now easy: if $\lambda$ denotes the smallest positive number such that $P_{n,k} \subset \lambda B_\diamond$ 
(note that these sets have different dimensions), then $w_G(P_{n,k}) \leq w_G(\lambda B_\diamond) = \lambda w_G(B_\diamond)$ 
and therefore $\lambda \geq c \sqrt{n/k}$ for $c=c_1/2$. So
\[
 \sup_{\Psi \in P_{n,k}} \|\Psi\|_{\diamond} \geq c \sqrt{n/k} .
\]

It remains to prove \eqref{eq1} and \eqref{eq2}. The inequality \eqref{eq1} follows from the results of \cite{SWZ10}. A sketch of the estimate $\vrad(P_{n,k}) = \Theta (\sqrt{n/k})$ appears in \cite{SWZ10}*{paragraph 4.3} (here $\vrad$ denotes the volume radius). The estimate~\eqref{eq1} then follows from the inequality $\vrad (K) \lesssim w_G(K)/ \sqrt{N}$ valid for any $N$-dimensional set (see \cite[p.\ 95]{AubrunSzarek17}). Here we have $N=n^4-n^2$.

To prove \eqref{eq2}, we introduce the unnormalized maximally entangled state
$\chi = \sum_{i=1}^n e_i \otimes e_i \in \C^n \otimes \C^n$ built on the canonical basis $(e_i)$, so that the rank one operator
\[ \ketbra{\chi}{\chi} = \sum_{i,j} E_{i,j} \otimes E_{i,j},\]
where $E_{i,j}$ are the canonical matrix units. The Choi--Jamio{\l}kowski isomorphism identifies linear maps on $\M_n$ with matrices in $\M_{n^2} = \M_n \otimes \M_n$ via
$\alpha : \cl B(\M_n) \to \M_{n^2}$ defined as 
\[
 \alpha(\Phi) = (\id \otimes \Phi)(\ketbra{\chi}{\chi}) = \big[ \Phi(E_{ij}) \big]
\]
and has the property that $\Phi$ is completely positive if and only if $\alpha(\Phi)$ is a positive semidefinite matrix~\cite{Choi75}.
For any $\Phi : \M_n \to \M_n$, we have 
$\|\Phi\|_\diamond \geq \frac{1}{n} \| \alpha(\Phi) \|_1$ and thus
$\alpha(B_\diamond) \subset n B_1$, where $B_1$ is the unit ball of the normed space $(\M_{n^2}^{sa},\|\cdot\|_1)$. 
Since $\alpha$ is an isometry between the underlying Euclidean spaces, we have
\[
 w_G(B_\diamond) = w_G(\alpha(B_\diamond)) \leq n\, w_G(B_1).
\]
Since the trace norm is dual to the operator norm, we have $w_G(B_1) = \E \|\Gamma\|$ 
where $\Gamma$ is a standard Gaussian vector in $\M_{n^2}^{sa}$, i.e., a GUE matrix. 
We conclude by using the fact that a $p \times p$ GUE matrix $\Gamma$ satisfies the inequality 
$\E \|\Gamma\| \leq 2\sqrt{p}$ (see \cite{Szarek05}*{Appendix F}).
\end{proof}

\begin{remark}
A numerical estimate for the constant $c$ appearing in Theorem~\ref{theorem:probabilistic-bound}, of the order $10^{-2}$, can be obtained by tracking the argument from \cite{SWZ10}. The main loss comes from the discretization argument used in \cite[Section~4.1]{SWZ10}.
\end{remark}

\section{Maps with Domain \texorpdfstring{$\M_n$}{Mn} where  the Completely Bounded Norm is not attained} \label{S:cb not attained}

For maps $\phi: X \to \M_n$ with $X$ an operator space, a result of R.~Smith \cite{smith} proves that $\|\phi\|_{cb} = \| \id_n \otimes \phi\|$. 
 In \cite{Pa1986} and \cite{Pa2002}, it is commented without proof that a result of Haagerup's implies that there is no corresponding result for completely bounded maps with domain $\M_n$ (see also \cite{PuWa}).  Since we believe that this phenomenon is related to the difficulty of computing $d_k(\M_n)$, we expand upon this comment here and provide a proof.

Let $\cl X \subseteq \cl A$ be an operator space, where $\cl A$ is a unital C*-algebra 
and consider the by now fairly standard operator system affliated with $\cl X$,
\[
 \cl S_{\cl X} = \Big\{ \begin{pmatrix} aI_{\cl A} &X \\Y^* & b I_{\cl A} \end{pmatrix} : X, Y \in \cl X, \,\, a,b \in \bb C \Big\} \subseteq \M_2(\cl A).
\]

\begin{lemma} 
A linear map $\phi: \cl X \to \cl B(H)$ is $m$-contractive if and only if $\Phi: \cl S_{\cl X} \to \cl B(H \oplus H)$ defined by
\[ 
 \Phi \big( \begin{pmatrix} aI_{\cl A} & X \\ Y^* & bI_{\cl A} \end{pmatrix} \big) 
 = \begin{pmatrix} aI_{\cl H} & \phi(X) \\ \phi(Y)^* & b I_{\cl H} \end{pmatrix} 
\]
is m-positive.
\end{lemma}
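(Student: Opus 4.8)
The plan is to reduce everything to the elementary description of positivity for $2\times 2$ operator matrices, following the classical analysis of the operator system $\cl S_{\cl X}$. First I would record the structural facts needed. Assuming $\cl A$ is concretely represented on a Hilbert space, the canonical shuffle $\M_m(\M_2(\cl A))\cong\M_{2m}(\cl A)$ is a $*$-isomorphism carrying a self-adjoint element of $\M_m(\cl S_{\cl X})$ to a matrix of the form $\begin{pmatrix} A\otimes I_{\cl A} & Z \\ Z^* & B\otimes I_{\cl A}\end{pmatrix}$ with $A,B$ self-adjoint scalar matrices in $\M_m$ and $Z\in\M_m(\cl X)$; under the same shuffle $\id_m\otimes\Phi$ becomes $\begin{pmatrix} A\otimes I_{\cl A} & Z \\ Z^* & B\otimes I_{\cl A}\end{pmatrix}\mapsto\begin{pmatrix} A\otimes I_H & \phi_m(Z) \\ \phi_m(Z)^* & B\otimes I_H\end{pmatrix}$, where $\phi_m=\id_m\otimes\phi$. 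So ``$\Phi$ is $m$-positive'' means precisely that this block map sends positive matrices to positive matrices. (One notes at the outset that $\Phi$ is well defined, linear, and $*$-preserving, since the scalars $a,b$ and the entries $X,Y$ of an element of $\cl S_{\cl X}$ are uniquely determined by it.) The positivity criterion I will invoke is the standard one (see \cite{Pa2002}): for positive operators $A,B$, the matrix $\begin{pmatrix} A & Z \\ Z^* & B\end{pmatrix}$ is positive iff $Z=A^{1/2}CB^{1/2}$ for some contraction $C$; in particular $\begin{pmatrix} I & Z \\ Z^* & I\end{pmatrix}\geq 0$ iff $\|Z\|\leq 1$, and when $A,B$ are invertible, positivity is equivalent to $\|A^{-1/2}ZB^{-1/2}\|\leq 1$.

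For the implication that $m$-positivity of $\Phi$ forces $m$-contractivity of $\phi$, I would simply test on the diagonal $A=B=I_m$: if $Z\in\M_m(\cl X)$ has $\|Z\|\leq 1$, then $\begin{pmatrix} I_m\otimes I_{\cl A} & Z \\ Z^* & I_m\otimes I_{\cl A}\end{pmatrix}$ is positive, hence so is its image $\begin{pmatrix} I_m\otimes I_H & \phi_m(Z) \\ \phi_m(Z)^* & I_m\otimes I_H\end{pmatrix}$, which forces $\|\phi_m(Z)\|\leq 1$. As $Z$ was an arbitrary element of the unit ball of $\M_m(\cl X)$, this gives $\|\phi_m\|\leq 1$.

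For the converse, assume $\|\phi_m\|\leq 1$ and take a positive $\begin{pmatrix} A\otimes I_{\cl A} & Z \\ Z^* & B\otimes I_{\cl A}\end{pmatrix}$; then $A,B\geq 0$. Replacing $A,B$ by $A+\e I_m,\ B+\e I_m$ (still positive, and the positive cone of $\M_{2m}(\cl B(H))$ is closed, so I may pass to the limit $\e\to 0$ at the end), I may assume $A,B$ invertible. Then $\widetilde Z:=(A^{-1/2}\otimes I_{\cl A})\,Z\,(B^{-1/2}\otimes I_{\cl A})$ lies in $\M_m(\cl X)$, since multiplication by scalar matrices preserves $\M_m(\cl X)$, and $\|\widetilde Z\|\leq 1$ by the criterion above. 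Because $\phi_m$ commutes with left and right multiplication by scalar matrices, $\phi_m(\widetilde Z)=(A^{-1/2}\otimes I_H)\,\phi_m(Z)\,(B^{-1/2}\otimes I_H)$ and $\|\phi_m(\widetilde Z)\|\leq 1$, so $\begin{pmatrix} I & \phi_m(\widetilde Z) \\ \phi_m(\widetilde Z)^* & I\end{pmatrix}\geq 0$; conjugating by $\operatorname{diag}(A^{1/2}\otimes I_H,\ B^{1/2}\otimes I_H)$ gives exactly $\begin{pmatrix} A\otimes I_H & \phi_m(Z) \\ \phi_m(Z)^* & B\otimes I_H\end{pmatrix}\geq 0$, as required.

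I do not expect any genuine obstacle: the statement is the $m$-level refinement of the familiar fact that completely contractive maps on $\cl X$ correspond to unital completely positive maps on $\cl S_{\cl X}$, and the argument is just the graded version of the usual proof. The only points needing care are the bookkeeping of the canonical shuffle and of the $*$-structure on $\cl S_{\cl X}$, and the routine $\e$-perturbation used to reduce to invertible diagonal blocks; everything else is a direct application of the $2\times 2$ positivity criterion.
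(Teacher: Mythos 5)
Your proof is correct and is essentially the argument the paper has in mind: the paper simply cites the proofs of \cite{Pa1986}*{Lemma~7.1} and \cite{Pa2002}*{Lemma~8.1}, which are exactly this canonical-shuffle plus $2\times 2$ positivity criterion (with the $\e$-perturbation to invertible diagonal blocks) carried out at each matrix level $m$. You have just written out that standard off-diagonalization argument explicitly, so there is nothing to add.
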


This follows from the same proof as given in \cite{Pa1986}*{Lemma~7.1} and \cite{Pa2002}*{Lemma~8.1}.

\begin{thm} \label {T:lifting mpos}
Let $\cl X$ be a finite dimensional operator space, let $\cl B$ be a unital C*-algebra, let $ \cl J \subseteq \cl B$ be a two-sided ideal, 
let $\phi: \cl X \to \cl B/\cl J$ and let $m \geq 1$.
Then there exists a lifting $\psi: \cl X \to \cl B$ with $\| \id_m \otimes \psi \| = \|  \id_m \otimes \phi \|$;
i.e., $\phi = q \circ \psi$ where $q : \cl B \to \cl B/\cl J$ is the quotient map.
\end{thm}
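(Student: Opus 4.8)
The plan is to reduce the operator-space lifting problem to a lifting problem for $m$-positive maps on the operator system $\cl S_{\cl X}$, solve the latter by a standard finite-dimensional compactness/selection argument, and then translate back via the preceding lemma. First I would form the operator system $\Phi : \cl S_{\cl X} \to \M_2(\cl B/\cl J)$ associated to $\phi$ as in the lemma; by that lemma $\Phi$ is $m$-positive with $\|\id_m\otimes\Phi\| = \|\id_m\otimes\phi\| =: \lambda$, and $\Phi$ is unital on the diagonal scalars. The key observation is that $\M_2(\cl B/\cl J) = \M_2(\cl B)/\M_2(\cl J)$, so lifting $\phi$ through $q$ is the same as lifting $\Phi$ through the quotient map $\M_2(\cl B)\to\M_2(\cl B)/\M_2(\cl J)$ by a map of the same block form (unital on scalars, with off-diagonal blocks given by a linear map $\psi:\cl X\to\cl B$ and its adjoint). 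So it suffices to produce an $m$-positive lift $\Psi$ of $\Phi$ of that block form with $\|\id_m\otimes\Psi\|\le\lambda$; the lemma, applied in reverse, then gives that $\psi$ is $m$-contractive, i.e.\ $\|\id_m\otimes\psi\|\le\lambda$, and the reverse inequality $\|\id_m\otimes\psi\|\ge\|\id_m\otimes\phi\|$ is automatic since $\phi=q\circ\psi$ and $q$ is contractive.

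Next I would construct the lift. Since $\cl X$ is finite dimensional, pick a basis $X_1,\dots,X_d$ of $\cl X$ and, for each $i$, choose an arbitrary norm-bounded lift $b_i\in\cl B$ of $\phi(X_i)$; this gives one linear lift $\psi_0$, hence one block-form linear lift $\Psi_0$ of $\Phi$, but it need not be $m$-positive. To repair $m$-positivity while keeping control of the norm I would use the standard device for lifting against ideals in the finite-dimensional setting: approximate identity / quasi-central arguments. Concretely, because $\Phi$ is $m$-positive and $m$-bounded by $\lambda$, for any $\e>0$ the finitely many matrix inequalities expressing "$\id_m\otimes\Psi$ sends a fixed finite $\e$-net of positive elements of $M_m(\cl S_{\cl X})$ to elements that are $\ge -\e$" hold modulo $\cl J$; lifting those finitely many elements and using an (quasi-central) approximate unit $(e_\alpha)$ of $\cl J$, the compressions $\Psi_\alpha := (1-e_\alpha)^{1/2}\Psi_0(\cdot)(1-e_\alpha)^{1/2}$ — adjusted on the scalar part to stay unital — become, in the limit, $m$-positive up to $\e$ and $m$-bounded by $\lambda+\e$. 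A diagonal/compactness argument over $\e\to0$, using finite-dimensionality of $\cl X$ and weak-$*$ compactness of bounded balls in $\cl B$ (or, if $\cl B$ is not a dual space, passing first to its bidual and then using that a self-adjoint $m$-positive map into $\cl B^{**}$ that is a lift can be averaged back — alternatively invoke that $\cl S_{\cl X}$ is finite dimensional so one may work entirely with separable sub-C*-algebras and use Arveson-type perturbation lemmas), then yields an honest $m$-positive block-form lift $\Psi$ with $\|\id_m\otimes\Psi\|\le\lambda$.

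The main obstacle I anticipate is precisely the passage from "$m$-positive up to $\e$ with norm $\le\lambda+\e$" to an exact lift: one must show the approximate lifts $\Psi_\e$ converge (along a subnet) to a genuine lift — i.e.\ the limit still satisfies $q\circ\Psi = \Phi$ and is exactly $m$-positive and exactly $m$-bounded by $\lambda$. Finite-dimensionality of $\cl X$ is what makes this feasible: the lifts live in the finite-dimensional space $\cl B(\cl X,\cl B)$ modulo nothing, so norm-bounded nets of coefficient vectors $(\psi_\e(X_i))_i$ have convergent subnets in $\cl B^d$ (after replacing $\cl B$ by a suitable weak-$*$-closed operator algebra, e.g.\ its bidual, which does not change $m$-positivity of maps into it nor the relevant norms), and $m$-positivity plus the norm bound are weak-$*$ closed conditions. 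The scalar (unital) normalization must be tracked throughout so that the limiting $\Psi$ retains the exact block form required to re-apply the lemma. Once these convergence details are in place, the equality $\|\id_m\otimes\psi\|=\|\id_m\otimes\phi\|$ follows as indicated above, completing the proof.
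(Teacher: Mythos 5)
Your reduction is the same as the paper's: pass to the operator system $\cl S_{\cl X}$, note $\M_2(\cl B/\cl J)=\M_2(\cl B)/\M_2(\cl J)$, lift the associated unital $m$-positive map $\Phi$ by a unital $m$-positive map of the same block form, and read off $\psi$ from the $(1,2)$-corner, the reverse norm inequality being automatic since $q$ is a complete contraction. The difference is at the heart of the matter: the paper obtains the unital $m$-positive lifting of $\Phi$ by citing Robertson--Smith \cite{RS89}*{Proposition 2.4} (or Kavruk \cite{Ka}*{Corollary 9.12}), whereas you try to prove that lifting directly, and your sketch has a genuine gap at the passage from approximate lifts to an exact lift. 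The quasi-central approximate unit argument does produce, for each $\e>0$, a self-adjoint lift of block form that is $m$-positive up to $\e$ on an $\e$-net and has norm at most $\lambda+\e$; but these approximate lifts form a bounded, non-compact family in the space of maps into $\cl B$. Taking weak-$*$ cluster points after passing to $\cl B^{**}$ gives a map into $\cl B^{**}$ whose ``error'' lies in the weak-$*$ closure $\cl J^{**}=z\cl B^{**}$ of $\cl J$, and such a map is \emph{not} a lifting into $\cl B$; there is no general procedure for ``averaging back'' into $\cl B$ while preserving both the lifting identity and positivity. Indeed, if such a compactness-plus-averaging argument were available, the identical reasoning (run level by level) would show that every unital completely positive map from a finite-dimensional operator system into $\cl B/\cl J$ admits a UCP lift, i.e.\ that every finite-dimensional operator system has the lifting property --- which is false, e.g.\ for the $5$-dimensional subsystem of $\M_4$ recalled at the end of this paper. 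So your argument, as written, proves too much and cannot be correct without using the finiteness of $m$ at the limiting step, which it does not.

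A second, related defect: to convert ``$m$-positive up to $\e$'' into exact $m$-positivity you must perturb the map (say by adding $\e'\,\omega(\cdot)\,I$ for a faithful state $\omega$ on $\cl S_{\cl X}$), but the perturbed map is then a lift of a perturbation of $\Phi$, not of $\Phi$ itself, so you are back to needing a statement of the form ``a point-norm limit of ($m$-positively) liftable maps is ($m$-positively) liftable.'' That gluing statement is exactly the nontrivial content of the Arveson--Choi--Effros lifting technology, and its $m$-positive version for finite-dimensional domains is precisely the Robertson--Smith/Kavruk result the paper invokes; it is where quasi-central approximate units are really needed (to splice a sequence of approximate lifts into one exact lift), not merely to manufacture the approximate lifts. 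To repair your proof you should either cite \cite{RS89}*{Proposition 2.4} or \cite{Ka}*{Corollary 9.12} at this point, as the paper does, or supply a genuine proof of that lifting theorem in which the finiteness of $m$ enters essentially.
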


\begin{proof} 
Without loss of generality we may assume that $\|\phi \otimes \id_m \|= 1$, 
so that $\Phi: \cl S_X \to \M_2(\cl B) /\M_2(\cl J) = \M_2(\cl B/\cl J)$ is unital and $m$-positive.
By the result of A. G. Robertson and R. R. Smith \cite{RS89}*{Proposition~2.4} or by Kavruk's result \cite{Ka}*{Corollary 9.12}, $\Phi$ has a unital $m$-positive lifting, $\Psi: \cl S_X \to \M_2(\cl B)$.
The (1,2)-corner of this map is the desired $\psi$.
\end{proof}

\begin{cor} \label{cor:lifting}
Let $\cl X$ be a finite dimensional operator space, and let $\cl B$ be a unital C*-algebra and $\cl J$ a two-sided ideal.
Suppose that there exists $m \geq 1$ so that for every $\psi: \cl X \to \cl B$, we have $\|\psi\|_{cb} = \|  \id_m \otimes \psi \|$. 
Then every map $\phi: \cl X \to \cl B /\cl J$ has a lifting $\psi: \cl X \to \cl B$ with the same cb-norm. 
\end{cor}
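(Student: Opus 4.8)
The plan is to obtain this as a direct consequence of Theorem~\ref{T:lifting mpos}, the hypothesis, and the elementary fact that the quotient $*$-homomorphism of a C*-algebra is completely contractive. Given a map $\phi : \cl X \to \cl B/\cl J$, I would first apply Theorem~\ref{T:lifting mpos} with the value of $m$ furnished by the hypothesis; this produces a lifting $\psi : \cl X \to \cl B$, i.e.\ $\phi = q \circ \psi$ where $q : \cl B \to \cl B/\cl J$ is the quotient map, with the additional property that $\|\id_m \otimes \psi\| = \|\id_m \otimes \phi\|$.

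Next I would feed this lifting into the hypothesis: since $\psi$ is a map from $\cl X$ into $\cl B$, we get $\|\psi\|_{cb} = \|\id_m \otimes \psi\|$. Chaining these equalities and using the trivial bound $\|\id_m \otimes \phi\| \leq \|\phi\|_{cb}$ gives $\|\psi\|_{cb} = \|\id_m \otimes \phi\| \leq \|\phi\|_{cb}$. For the opposite inequality I would simply note that $q$ is a unital $*$-homomorphism, hence completely contractive, so $\|\phi\|_{cb} = \|q \circ \psi\|_{cb} \leq \|\psi\|_{cb}$. Combining the two inequalities yields $\|\phi\|_{cb} = \|\psi\|_{cb}$, which is exactly the assertion that $\psi$ is a lifting of $\phi$ with the same cb-norm.

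I do not anticipate a genuine obstacle here, since all the substance is already packaged into Theorem~\ref{T:lifting mpos} (which in turn rests on the Robertson--Smith and Kavruk lifting theorems for $m$-positive maps). The only points that require a moment's care are purely bookkeeping: making sure the C*-algebra named in the hypothesis is literally the codomain $\cl B$ of the lifting $\psi$ rather than some larger ambient algebra, and recording that $\|\id_m \otimes \phi\| \leq \|\phi\|_{cb}$ holds automatically while the complete contractivity of the quotient map supplies the reverse bound with no work.
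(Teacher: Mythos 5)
Your proposal is correct and follows essentially the same route as the paper: apply Theorem~\ref{T:lifting mpos} with the hypothesized $m$ to get a lifting with $\|\id_m\otimes\psi\|=\|\id_m\otimes\phi\|$, use the hypothesis to convert this into $\|\psi\|_{cb}$, and close the chain with $\|\id_m\otimes\phi\|\leq\|\phi\|_{cb}\leq\|\psi\|_{cb}$, the last inequality coming from complete contractivity of the quotient map (which the paper leaves implicit but you correctly make explicit). No gaps.
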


\begin{proof} 
Given $\phi: \cl X \to \cl B /\cl J$, use Theorem~\ref{T:lifting mpos} to find a lifting $\psi:\cl X \to \cl B$ with $\|  \id_m \otimes \psi \| = \|  \id_m \otimes \phi \|$.
Then 
\[
 \| \psi \|_{cb} = \|  \id_m \otimes \psi \| = \|  \id_m \otimes \phi \| \leq \|\phi\|_{cb} \leq \| \psi \|_{cb}.
\]
Therefore $\psi$ is the desired lifting.
\end{proof}

The following is a restatement of a result of Haagerup \cite{Hag}. For those familiar with the concepts, $\cl R$ denotes the hyperfinite $II_1$-factor and $\cl R^{\omega}$ is an ultrapower. For those unfamiliar with these concepts, it is enough to remark that $\cl R^{\omega}$ is a unital C*-algebra and it is a quotient of the C*-algebra $\ell^{\infty}(\bb N, \cl R)$.

\begin{cor}[Haagerup] 
Let $n>2$, then there is no constant $m \in \bb N$ such that for every linear map $\psi: \ell^{\infty}_n \to \ell^{\infty}(\bb N, \cl R)$ one has 
\[ 
\| \id_m \otimes \psi \| = \| \psi \|_{cb}.
\] 
\end{cor}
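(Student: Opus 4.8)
The plan is to derive this corollary from Corollary~\ref{cor:lifting} by contraposition, using the well-known fact that $\ell^\infty_n$ (equivalently, the diagonal operator system which for $n > 2$ is the key case) fails the lifting property in the relevant sense, and that this failure is detected precisely through maps into an ultrapower. First I would set $\cl X = \ell^\infty_n$, viewed as a finite-dimensional operator space, take $\cl B = \ell^\infty(\bb N, \cl R)$ and let $\cl J$ be the ideal of sequences vanishing along the ultrafilter $\omega$, so that $\cl B / \cl J = \cl R^\omega$. Suppose, for contradiction, that there were an $m \in \bb N$ such that $\|\id_m \otimes \psi\| = \|\psi\|_{cb}$ for every linear map $\psi : \ell^\infty_n \to \ell^\infty(\bb N, \cl R)$. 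Then Corollary~\ref{cor:lifting} applies verbatim and yields that every linear map $\phi : \ell^\infty_n \to \cl R^\omega$ admits a lifting $\psi : \ell^\infty_n \to \ell^\infty(\bb N, \cl R)$ with $\|\psi\|_{cb} = \|\phi\|_{cb}$.

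The next step is to invoke Haagerup's theorem \cite{Hag}: there exists a completely bounded (in fact, as Haagerup shows, one can take a concrete) map $\phi : \ell^\infty_n \to \cl R^\omega$ with $n > 2$ whose completely bounded norm is \emph{strictly} smaller than the infimum of the completely bounded norms over all liftings to $\ell^\infty(\bb N, \cl R)$. This is exactly the content of Haagerup's example showing that the $\cb$-norm of a map into a quotient C*-algebra can be strictly smaller than that of any lift — equivalently, that the canonical map on $\M_n$ (or on $\ell^\infty_n$ through the associated operator space) does not lift isometrically in $\cb$-norm. The existence of such a $\phi$ directly contradicts the conclusion of the previous paragraph, completing the proof by contradiction.

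The main obstacle is pinning down and citing correctly the precise form of Haagerup's result: Haagerup's paper \cite{Hag} is phrased in terms of the decomposition norm and lifting of completely positive or completely bounded maps between von Neumann algebras and their ultrapowers, and one must extract from it the statement that for $n > 2$ there is a $\cb$-contractive map $\ell^\infty_n \to \cl R^\omega$ with no $\cb$-contractive (or even no $\cb$-norm-preserving) lift. A secondary subtlety is the passage from $\M_n$ to $\ell^\infty_n$ and back: one should check that the operator space $\ell^\infty_n$ (rather than $\M_n$ itself) is the object for which Haagerup's obstruction applies, and that the restriction $n > 2$ is indeed the sharp threshold — which matches the $n \geq 3$ threshold mentioned for $\M_n$ in the introduction via the operator system $\cl S_{\cl X}$ construction. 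Once the correct statement of Haagerup's theorem is in hand, the contradiction is immediate and no further estimates are needed.
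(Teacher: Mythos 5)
Your proposal is correct and follows essentially the same route as the paper: assume such an $m$ exists, apply Corollary~\ref{cor:lifting} with $\cl X = \ell^\infty_n$, $\cl B = \ell^\infty(\bb N,\cl R)$ and $\cl B/\cl J = \cl R^\omega$ to get cb-norm-preserving liftings, and contradict this with Haagerup's Example 3.1, which exhibits a completely contractive map $\ell^\infty_n \to \cl R^\omega$ ($n>2$) every lifting of which has cb-norm at least $\frac{n}{2\sqrt{n-1}} > 1$. The only minor imprecision is your description of $\cl J$ (it is the trace-norm ideal defining the tracial ultrapower, not the sequences vanishing in operator norm along $\omega$), but since the argument uses only that $\cl R^\omega$ is a quotient of $\ell^\infty(\bb N,\cl R)$ by a two-sided ideal, this does not affect correctness.
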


\begin{proof}
In \cite{Hag}*{Example 3.1}, U.~Haagerup gives an example of a completely contractive map with domain $\ell^{\infty}_n, \, n > 2$, 
into $\cl R^\omega = l^\infty(\bb N, \cl R)/I_\omega$ 
with no completely contractive lifting to $l^\infty(\bb N, \cl R)$.  
In fact, any lifting has cb-norm at least $\frac{n}{2\sqrt{n-1}}$.

Thus, by Corollary~\ref{cor:lifting}, there does not exist an $m$ where the cb-norm is attained for all maps into $\ell^{\infty}(\bb N, \cl R)$.
\end{proof}

By projecting $\M_n$ onto its diagonal one obtains the same result with domain $\M_n$ for $n >2$. 
For maps with domain $\ell^{\infty}_2$, the norm and cb-norm are equal for every map, see for example \cite{Hag}*{Proposition 3.4}. 
So $n > 2$ is essential in Haagerup's result.

A perhaps interesting historical note. Upon giving the example of the completely contractive map with no completely contractive lifting, \cite{Hag} remarks that, 
``this gives an answer to a question raised by Paulsen in his 1983 talk at the AMS meeting in Denver", without specifying the question. 
Since the talk is not available, we remark that the question was whether or not there exists a constant $m$, such that for all maps $\psi: \M_n \to B(\cl H)$ 
one has $\| \psi \otimes \id_m \| = \| \psi \|_{cb}$? 
If such a constant $m$ existed for $B(\cl H)$ the same constant would work for maps into any operator space, and so we know that there is no such $m$.

\begin{remark} Given an operator space $\cl X$, there are operator spaces $\textrm{MIN}_k(\cl X)$ and $\textrm{MAX}_k(\cl X)$ that are the analogues of our operator systems $\OMIN_k(\cl S)$ and $\OMAX_k(\cl S)$.  In particular, every $k$-contractive map from $\cl X$ to another operator space $\cl Y$ is completely contractive as a map from $\textrm{MAX}_k(\cl X)$ to $\cl Y$. By Theorem~\ref{T:lifting mpos} every $k$-contractive map from $\cl X$ to a quotient $\cl B/\cl J$ has a completely contractive lifting from $\textrm{MAX}_k(\cl X)$ to $\cl B$. If we consider the identity map $\gamma_k: \ell^{\infty}_n \to \textrm{MAX}_k(\ell^{\infty}_n)$, Haagerup's result  shows that $\| \gamma_k \|_{cb} \geq \frac{n}{2 \sqrt{n-1}}$, for every $k \in \bb N$.
In contrast, since $\ell^{\infty}_n$ is an abelian C*-algebra, every positive map with domain or range $\ell^{\infty}_n$ is completely positive and hence, $\ell^{\infty}_n = \OMIN_k(\ell^{\infty}_n) = \OMAX_k(\ell^{\infty}_n)$ for every $k$.
\end{remark}

\section{Exactness, Lifting Properties and Matrix Ranges}\label{sec: exactness and lifting prop.}

In \cite{PaPa} the authors studied finite dimensional operator systems of the form 
\[\cl S_{\rm T}= \mathspan \{ I, T_1,...,T_d, T_1^*,...,T_d^* \}\] and characterized which operator systems 
had exactness and lifting properties in terms of a certain Hausdorff distance involving the joint matrix ranges of the $d$-tuple $(T_1,..., T_d)$.  
We refer the reader to \cite{KPTT} for the definitions of these properties for operator systems. 
Here we revisit the results of \cite{PaPa} in a basis free manner that allows us to study infinite dimensional versions 
and relates these properties to our constants $d_k(\cl S)$ and $r_k(\cl S).$
We prove the following theorem.

\begin{thm} \label{theorem:infinity} Let $\cl S$ be a finite dimensional operator system.
\begin{enumerate}
\item $\cl S$ has the lifting property if and only if $d_{\infty}(\cl S)=1$.
\item $\cl S$ is exact if and only if $r_{\infty}(\cl S)=1$.
\end{enumerate}
\end{thm}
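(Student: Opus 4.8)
The plan is to reduce both equivalences to the main result of \cite{PaPa}, using Proposition~\ref{prop:rkdkViaId} to translate the statements into the language of the identity maps $\id_{\cl S}^{\,\OMAX_k(\cl S)}$ and $\id_{\OMIN_k(\cl S)}^{\ \cl S}$, and then matching these against the Hausdorff-distance criteria for exactness and the lifting property. First I would record the two ingredients from \cite{PaPa}: writing $\cl S = \mathspan\{I, T_1,\dots,T_d,T_1^*,\dots,T_d^*\}$ for a spanning $d$-tuple, the results there say that $\cl S$ is exact iff the Hausdorff distance between the $k$-th matrix range of $(T_1,\dots,T_d)$ computed in $\cl S$ and computed in $\OMIN_k(\cl S)$ tends to $0$, and dually that $\cl S$ has the lifting property iff the analogous distance involving $\OMAX_k(\cl S)$ tends to $0$. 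The task is then to see that ``this Hausdorff distance tends to $0$'' is equivalent to ``$\|\id_{\OMIN_k(\cl S)}^{\ \cl S}\|_{cb}\to 1$'' (resp.\ ``$\|\id_{\cl S}^{\,\OMAX_k(\cl S)}\|_{cb}\to 1$''), after which Proposition~\ref{prop:rkdkViaId} finishes the identification with $r_\infty(\cl S)$ and $d_\infty(\cl S)$.

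Next I would make that equivalence precise. Since $\OMAX_k(\cl S)$ and $\OMIN_k(\cl S)$ are $\cl S$ as a unital $*$-vector space with coarser, resp.\ finer, matrix orders, the map $\id_{\OMIN_k(\cl S)}^{\ \cl S}$ is unital and $k$-positive, and its cb-norm measures exactly how far the cone $\M_m(\OMIN_k(\cl S))^+$ is from $\M_m(\cl S)^+$, uniformly in $m$. On the other hand, the matrix ranges (equivalently, the matrix state spaces $\UCP(\cl S,\M_m)$) determine the operator system structure completely, and the Hausdorff distance between the matrix ranges of $\cl S$ and of $\OMIN_k(\cl S)$ is a geometric encoding of the same discrepancy of cones — this is essentially the content of the comparison used in \cite{PaPa}. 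I would spell out the two directions: if $\|\id_{\OMIN_k(\cl S)}^{\ \cl S}\|_{cb}\le 1+\e_k$ with $\e_k\to 0$, then every element of $\M_m(\OMIN_k(\cl S))^+$ lies within a controlled distance of $\M_m(\cl S)^+$, forcing the matrix ranges to converge, and hence exactness by the \cite{PaPa} criterion; conversely, Hausdorff convergence of the matrix ranges gives a uniform-in-$m$ bound on $\|(\id_{\OMIN_k(\cl S)}^{\ \cl S})_m\|$, hence on the cb-norm, driving $r_k(\cl S)=\|\id_{\OMIN_k(\cl S)}^{\ \cl S}\|_{cb}\to 1$. The lifting-property statement is the verbatim dual, working with $\OMAX_k(\cl S)$, the cone inclusion $\M_m(\OMAX_k(\cl S))^+\subseteq\M_m(\cl S)^+$, and $d_k(\cl S)=\|\id_{\cl S}^{\,\OMAX_k(\cl S)}\|_{cb}$; here I would invoke the lifting/quotient machinery already assembled in Section~\ref{S:cb not attained} (in particular Theorem~\ref{T:lifting mpos} and the Robertson--Smith/Kavruk lifting results) to connect approximate liftings with the $\OMAX_k$ cones, exactly as \cite{PaPa} does.

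The cleanest route, and the one I would actually write, is to avoid re-deriving the matrix-range estimates from scratch and instead quote \cite{PaPa} at the level of the intermediate statement ``$\cl S$ is exact $\iff$ the completely positive identity $\id_{\OMIN_k(\cl S)}^{\ \cl S}$ is asymptotically a complete order isomorphism'' and its lifting-property analogue, so that the only genuinely new step is Proposition~\ref{prop:rkdkViaId} plus the observation that ``asymptotically a complete order isomorphism for a unital surjection onto an injective-enough target'' is the same as ``cb-norm $\to 1$''. I would handle the ``only if'' directions by contrapositive: if $\cl S$ is not exact, \cite{PaPa} provides a fixed $\delta>0$ with the Hausdorff distances bounded below by $\delta$ for all $k$, which by the cone-comparison yields a uniform lower bound $r_k(\cl S)\ge 1+c\delta>1$, so $r_\infty(\cl S)>1$; likewise for non-lifting and $d_\infty(\cl S)$.

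The main obstacle I anticipate is the passage between the two formalisms — matrix ranges / Hausdorff distance on one side, operator-system cones and cb-norms of the identity maps on the other — done \emph{uniformly in the matrix level $m$}. A pointwise (fixed-$m$) comparison is routine, but $r_k$ and $d_k$ are suprema over all $m$, while the \cite{PaPa} criteria are phrased via the matrix ranges at all levels; one must check that a single $k$-dependent bound controls every level at once, with no hidden dependence on $m$. Establishing this uniformity — likely by exploiting that $\OMIN_k$ and $\OMAX_k$ are defined by a single ``$k$-dimensional test'' so that the relevant estimates factor through $\M_k$-valued maps and are automatically level-independent — is the crux; once it is in place, everything else is bookkeeping plus Proposition~\ref{prop:rkdkViaId}.
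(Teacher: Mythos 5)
Your overall plan coincides with the paper's: fix a self-adjoint basis so that $\cl S=\cl S_{\rm T}$, invoke Proposition~\ref{prop:rkdkViaId}, show that $\|\id_{\cl S}^{\,\OMAX_k(\cl S)}\|_{cb}\to 1$ is equivalent to $d_H(\cl W({\rm T}),\cl W({\rm T}^{k\kmax}))\to 0$ (and the analogous statement for $\OMIN_k$), and then quote Theorem~\ref{theorem:PaPa}. The problem is that your proposal never actually supplies that equivalence, and the equivalence is the entire mathematical content of the theorem. The ``intermediate statement'' you propose to quote from \cite{PaPa} --- exactness iff the identity is ``asymptotically a complete order isomorphism'' --- is not a result of \cite{PaPa}; their criteria are phrased precisely in terms of Hausdorff distance of matrix ranges, so quoting them at that level assumes exactly the bridge you are supposed to build. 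Likewise, the heuristic that a cb-norm bound on the identity forces the cone $\M_m(\OMIN_k(\cl S))^+$ to lie ``within a controlled distance'' of $\M_m(\cl S)^+$ is not an argument (the cones are unbounded, and the \cite{PaPa} criterion concerns UCP images, i.e.\ matrix ranges, not cones), and the appeal to Theorem~\ref{T:lifting mpos} and the Robertson--Smith/Kavruk lifting results plays no role in this theorem.

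Concretely, what is missing are the two quantitative estimates that make the bridge uniform in the matrix level. For ``cb-norm $\to 1$ implies distance $\to 0$'': given $(A_1,\dots,A_d)=(\phi(T_1^{k\kmax}),\dots,\phi(T_d^{k\kmax}))$ with $\phi$ UCP on $\OMAX_k(\cl S)$ and $\gamma=\id_{\cl S}^{\,\OMAX_k(\cl S)}$, one writes $\phi\circ\gamma=\phi_1-\phi_2$ by Wittstock with $\|\phi_1(I)+\phi_2(I)\|\le\|\gamma\|_{cb}$, sets $P=\phi_1(I)$ and $\psi(X)=P^{-1/2}\phi_1(X)P^{-1/2}$; then $\psi$ is UCP on $\cl S$ and $\max_i\|A_i-\psi(T_i)\|\le 2(\|\gamma\|_{cb}-1)$, an estimate independent of the level $n$. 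For the converse: finite dimensionality gives a completely bounded coordinate map $\e:\cl S\to\ell^{\infty}_{d+1}$, $T_i\mapsto e_i$, hence $\max_i\|A_i\|\le\|\e\|_{cb}\,\|\sum_i A_i\otimes T_i\|$, and combining this with $\|\sum_i A_i\otimes T_i\|=\sup\{\|\sum_i A_i\otimes B_i\| : {\rm B}\in\cl W({\rm T})\}$ and a perturbation argument yields $\|\gamma\|_{cb}\le 1+\|\e\|_{cb}\,d_H(\cl W({\rm T}),\cl W({\rm T}^{k\kmax}))$, with $\|\e\|_{cb}$ independent of $k$. These two inequalities are exactly the uniform-in-$m$ passage that you yourself flag as the crux and then leave unresolved; the suggestion that the estimates ``factor through $\M_k$-valued maps'' does not produce them. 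Note also that the $\|\e\|_{cb}$ device is where finite dimensionality genuinely enters --- the paper's example of the unitized compacts shows the statement fails in infinite dimensions --- so any proof that does not isolate this step cannot be complete.
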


We also show that an infinite dimensional operator system $\cl S$ satisfying $r_{\infty}(\cl S)=1$ is exact, and give an example showing that the converse does not hold.

Given a $d$-tuple $\mathrm{T}= (T_1,..., T_d)$ of elements of $B(\cl H)$ we let 
\[
 \cl S_{\rm T} = \mathspan \{ I, T_1,...,T_d, T_1^*,...,T_d^* \}.
\]
Given $k \geq 1$, let $T_i^{k{\kmin}}$ and $T_i^{k\kmax}$ denote the images of $T_i$ in the operator systems $\OMIN_k(\cl S_{\rm T})$ and $\OMAX_k(\cl S_{\rm T})$, respectively. 
Denote the corresponding $d$-tuples by ${\rm T}^{k\kmin}$ and ${\rm T}^{k\kmax}$, respectively.

The \emph{$n$-th matrix range} of $\rm T$ is the set of $d$-tuples of $n \times n$ matrices of the form
\[
 W^n({\rm T}) := \{ ( \phi(T_1), ..., \phi(T_d)) \vert \,\, \phi: \cl S_{\rm T} \to \M_n, \,\UCP \}.
\]
The \emph{matrix range} of $\rm T$ is the set $\cl W({\rm T}) = \{ W^n({\rm T}) : n \in \bb N \}$.
For those familiar with the concepts, these are the prototypical bounded matrix convex structures on $\bb C^d$.
It is not hard to see that
\[
 W^n( {\rm T}^{k\kmin}) \!\subseteq\! W^n({\rm T}^{(k+1)\kmin}) \!\subseteq\! W^n( {\rm T}) 
 \!\subseteq\! W^n({\rm T}^{(k+1)\kmax}) \!\subseteq\! W^n({\rm T}^{k\kmax}).
\]

Given subsets $X$, $Y$ in a metric space, their Hausdorff distance is
\[ 
 d_H(X,Y) = \max \Big\{ \sup_{y \in Y} \inf_{x \in X} d(x,y) , \sup_{x \in X} \inf_{y \in Y} d(x,y) \Big\}.
\] 
We define a metric on $d$-tuples of $n \times n$ matrices by setting 
\[
 d\big( (X_1,...,X_d), (Y_1,...,Y_d) \big) = \max_j \|X_j - Y_j \| .  
\]
Given matrix ranges $\cl W(\rm T) \subseteq \cl W(\rm S)$, define 
\[
 d_H( \cl W({\rm T}), \cl W({\rm S})) = \sup_{n\geq 1} d_H(W^n({\rm T}), W^n({\rm S})) . 
\]

We require the following result of Passer and the fourth author \cite{PaPa}.

\begin{thm}[\cite{PaPa}] \label{theorem:PaPa} Let ${\rm T}= (T_1,..., T_d)$ be a d-tuple of operators. 
\begin{enumerate}
\item $\cl S_{\rm T}$ has the LP if and only if $\displaystyle\lim_{k\to\infty} d_H(\cl W({\rm T}), \cl W({\rm T}^{k\kmax})) =0$.
\item $\cl S_{\rm T}$ is exact if and only if $\displaystyle\lim_{k\to\infty} d_H(\cl W({\rm T}^{k\kmin}), \cl W(\rm T)) =0$.
\end{enumerate}
\end{thm}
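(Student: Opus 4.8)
\emph{Strategy.} The plan is to reduce both equivalences to facts about the quantities $r_k(\cl S) = \|\id_{\OMIN_k(\cl S)}^{\,\cl S}\|_{cb}$ and $d_k(\cl S) = \|\id_{\cl S}^{\,\OMAX_k(\cl S)}\|_{cb}$ supplied by Proposition~\ref{prop:rkdkViaId}, applied to $\cl S = \cl S_{\rm T}$ (which is a finite dimensional operator system, of dimension at most $2d+1$). I record once and for all that, by equivalence of norms on the finite dimensional space of maps out of $\cl S$, there is a constant $C$ depending only on the tuple ${\rm T}=(T_1,\dots,T_d)$ with $\|\Theta\|_{cb} \le C\max\{\|\Theta(I)\|, \max_i\|\Theta(T_i)\|\}$ for every self-adjoint linear map $\Theta$ out of $\cl S$, uniformly in the target; set $M = \max_i\|T_i\|$.

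\emph{Step 1: a bridge between Hausdorff distance and cb-norms.} I would first prove that $r_k(\cl S)\to1$ if and only if $d_H(\cl W({\rm T}^{k\kmin}),\cl W({\rm T}))\to0$, with a two-sided quantitative comparison. One direction: since $\|x\|_{\M_m(\OMIN_k(\cl S))} = \sup\{\|(\id_m\otimes\psi)(x)\| : \psi\in\UCP(\OMIN_k(\cl S),\M_N),\, N\ge1\}$, given $x\in\M_m(\cl S)$ and a UCP $\phi:\cl S\to\M_N$ nearly attaining $\|x\|_{\M_m(\cl S)}$, a hypothesis $d_H(W^N({\rm T}^{k\kmin}),W^N({\rm T}))\le\e$ yields $\psi\in\UCP(\OMIN_k(\cl S),\M_N)$ with $\max_i\|\psi(T_i)-\phi(T_i)\|\le\e$, hence $\|\phi-\psi\|_{cb}\le C\e$ and $\|x\|_{\M_m(\cl S)} \le \|x\|_{\M_m(\OMIN_k(\cl S))} + C\e\|x\|_{\M_m(\cl S)}$, so $r_k(\cl S)\le(1-C\e)^{-1}$. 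Conversely, given $(\phi(T_i))_i\in W^n({\rm T})$ with $\phi:\cl S\to\M_n$ UCP, view $\phi$ as a map $\OMIN_k(\cl S)\to\M_n$ of cb-norm at most $r_k(\cl S)$, and apply Wittstock's theorem (legitimate as $\M_n$ is injective): $\phi=\phi^+-\phi^-$ with $\phi^\pm$ completely positive and $\|\phi^+(I)+\phi^-(I)\|\le r_k(\cl S)$. Unitality forces $0\le\phi^-(I)\le\tfrac12(r_k(\cl S)-1)I$ and $I\le\phi^+(I)\le(1+\tfrac12(r_k(\cl S)-1))I$, so $\psi := \phi^+(I)^{-1/2}\phi^+(\cdot)\phi^+(I)^{-1/2}$ is a UCP map $\OMIN_k(\cl S)\to\M_n$, and a short estimate (using $\|T_i\|_{\OMIN_k(\cl S)}\le\|T_i\|$) gives $\max_i\|\psi(T_i)-\phi(T_i)\|\le(r_k(\cl S)-1)M$, with no dependence on $n$; taking suprema over $n$ yields $d_H(\cl W({\rm T}^{k\kmin}),\cl W({\rm T}))\le(r_k(\cl S)-1)M$. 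The identical argument with $\OMAX_k$ in place of $\OMIN_k$ (applying Wittstock to $\phi\circ\id_{\cl S}^{\,\OMAX_k(\cl S)}:\cl S\to\M_n$ for $\phi\in\UCP(\OMAX_k(\cl S),\M_n)$) shows $d_k(\cl S)\to1$ iff $d_H(\cl W({\rm T}),\cl W({\rm T}^{k\kmax}))\to0$.

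\emph{Step 2: identifying the limits.} It then remains to show $r_\infty(\cl S)=1$ iff $\cl S$ is exact, and $d_\infty(\cl S)=1$ iff $\cl S$ has the lifting property. If $\cl S$ is exact, then by the description of exactness of finite dimensional operator systems through approximate unital complete order embeddings into matrix algebras \cite{KPTT}, for each $\e>0$ there is a unital complete order embedding $\iota:\cl S\to\M_N$ with $\|\iota^{-1}\|_{cb}\le1+\e$ on its range; as $\iota$ is then also UCP as a map $\OMIN_N(\cl S)\to\M_N$, this forces $\|x\|_{\M_m(\OMIN_N(\cl S))}\ge(1+\e)^{-1}\|x\|_{\M_m(\cl S)}$, hence $r_N(\cl S)\le1+\e$, hence $r_\infty(\cl S)=1$ by monotonicity of $(r_k)$. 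Conversely, if $r_\infty(\cl S)=1$, then for each $k$ the operator system $\OMIN_k(\cl S)$ embeds unitally and completely order isomorphically into $C(\Omega_k)\otimes\M_k$, which is nuclear and hence exact, so $\OMIN_k(\cl S)$ is exact with exactness constant $1$; since the exactness constant is submultiplicative along unital completely bounded isomorphisms and $\id_{\cl S}^{\,\OMIN_k(\cl S)}$, $\id_{\OMIN_k(\cl S)}^{\,\cl S}$ have cb-norms $1$ and $r_k(\cl S)$, the exactness constant of $\cl S$ is at most $r_k(\cl S)$, hence equals $1$, so $\cl S$ is exact. The lifting-property statement I would obtain by duality: for finite dimensional operator systems $\OMAX_k(\cl S)^d=\OMIN_k(\cl S^d)$ \cites{Xh1,Xh2,Ka} and the cb-norm is preserved under taking the dual of a map, so $d_k(\cl S)=r_k(\cl S^d)$ and $d_\infty(\cl S)=r_\infty(\cl S^d)$; combining with the duality of the lifting property and exactness for finite dimensional operator systems \cites{Ka,KPTT} and the exactness case, $\cl S$ has the lifting property $\iff\cl S^d$ is exact $\iff r_\infty(\cl S^d)=1\iff d_\infty(\cl S)=1$, which by Step 1 is the vanishing of $d_H(\cl W({\rm T}),\cl W({\rm T}^{k\kmax}))$.

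\emph{Main obstacle.} I expect the delicate point to be the second half of Step 1: manufacturing a genuinely unital completely positive map from an ``almost UCP'' one while keeping the error on the generators independent of the matrix size $n$. Wittstock's decomposition is invoked at every matrix level, but the estimate survives because, after renormalising the positive part, the distortion of each $T_i$ is controlled purely in terms of $r_k(\cl S)-1$ and $\max_i\|T_i\|$; checking this carefully (including that the decomposition-norm infimum is attained with range $\M_n$, as in the proof of Lemma~\ref{lemma:wittstock-covariant}) is the core computation. The rest is an assembly of standard operator-system facts — injectivity of $\M_n$ and Wittstock's theorem, the $C(\Omega_k)\otimes\M_k$ picture of $\OMIN_k(\cl S)$, nuclearity and the inheritance of exactness by subsystems, the matrix-embedding description of exactness, and the duality $\OMAX_k(\cl S)^d=\OMIN_k(\cl S^d)$ together with the lifting/exactness duality — where the main book-keeping hazard is to keep straight the opposite directions of the order and norm inequalities relating $\cl S$, $\OMIN_k(\cl S)$ and $\OMAX_k(\cl S)$.
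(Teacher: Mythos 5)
Your proposal should first be set against what the paper actually does: Theorem \ref{theorem:PaPa} is \emph{quoted} from \cite{PaPa} and is never proved in this paper; what the paper proves is Theorem \ref{theorem:infinity}, which it \emph{deduces from} Theorem \ref{theorem:PaPa} via exactly the bridge you describe in Step 1 (the Wittstock decomposition of $\phi\circ\gamma$, renormalization by $\phi^+(I)^{-1/2}$, and the bound $d_H \leq \mathrm{const}\cdot(\|\gamma\|_{cb}-1)$ in one direction; the coordinate-functional estimate giving $\|\gamma\|_{cb} \leq 1 + \mathrm{const}\cdot d_H$ in the other). So your Step 1 is sound but is essentially the paper's proof of Theorem \ref{theorem:infinity} run in reverse, and your plan therefore stands or falls with Step 2, where you must establish ``$r_\infty=1\iff$ exact'' and ``$d_\infty=1\iff$ LP'' \emph{independently} of Theorem \ref{theorem:PaPa}.

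Step 2 has genuine gaps. (a) The duality step is wrong: operator-system duality does not preserve cb-norms, because the operator system structure on $\cl S^d$ (matrix-ordered dual with a faithful state as unit) does not induce the dual \emph{operator space} norm (e.g.\ the operator system dual of $\M_n$ is $\M_n$, while its operator space dual is trace class). Indeed the identity $d_k(\cl S)=r_k(\cl S^d)$ contradicts this paper's own results: $\M_n^d$ is unitally completely order isomorphic to $\M_n$, so your identity would force $d_1(\M_n)=r_1(\M_n)=2n-1$, whereas Theorem \ref{thm:d_1ofM_n} gives $d_1(\M_n)=n$. The weaker statement $d_\infty(\cl S)=1\iff r_\infty(\cl S^d)=1$, which is all you need, is not supported by any argument you give; as far as one can tell it is known only as a \emph{consequence} of the theorem you are proving together with Kavruk's exactness/LP duality, so invoking it here is circular. (b) The ``exact $\Rightarrow r_\infty(\cl S)=1$'' half rests on a characterization of exactness of finite-dimensional operator systems by approximate \emph{unital, completely order} embeddings into matrix algebras, attributed to \cite{KPTT}; that reference characterizes exactness by $\cl S\otimes_{\min}\cl T=\cl S\otimes_{el}\cl T$, not by such embeddings (note also that if $\iota$ were literally a unital complete order embedding then $\|\iota^{-1}\|_{cb}=1$ automatically, so you must mean an approximate order embedding). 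Upgrading Pisier-type $1$-exactness embeddings to unital, approximately order-preserving ones with controlled inverse is essentially the nontrivial content of the results of \cite{Ka} and \cite{PaPa} that you are trying to reprove, and the ``exactness constant'' and its submultiplicativity for operator systems are never defined or justified (the converse direction, $r_\infty=1\Rightarrow$ exact, you could instead take from the paper's el-tensor argument in Section 6, which needs no finite dimensionality). In short: the bridge is fine, but the substantive halves of Step 2 rely on a false identity and on citations that do not contain the statements in the form used, so the core of Theorem \ref{theorem:PaPa} remains unproved.
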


%
%

\begin{proof}[Proof of Theorem \ref{theorem:infinity}]
We deduce Theorem \ref{theorem:infinity} from Theorem \ref{theorem:PaPa}.
Let $\cl S$ be a finite-dimensional operator system and $T_0 = I, T_1,..., T_d$ be a self-adjoint basis for $\cl S$; so $\cl S = \cl S_{\rm T}$ for the $d$-tuple ${\rm T} = (T_1,..., T_d)$. It suffices to show that $d_\infty(\cl S)=1$ if and only $d_H(\cl W({\rm T}), \cl W({\rm T}^{k\kmax}))$ tends to $0$ as $k$ tends to infinity.

Let $\gamma= \id_{\cl S}^{\, \OMAX_k(\cl S)}$ be the map $\id: \cl S \to \OMAX_k(\cl S)$.
We begin by showing that 
\[
 d_H( \cl W( {\rm T}), \cl W({\rm T}^{k\kmax})) \leq 2 (\| \gamma\|_{cb}-1) .
\]

Fix $(A_1, ..., A_d) \in W^n({\rm T}^{k\kmax})$ and $\phi \in \UCP(\cl S_{{\rm T}^{k\kmax}}, \M_n)$ such that $\phi(T_i^{k\kmax}) = A_i$.
Then the map $\phi \circ \gamma: \cl S_{\rm{T}} \to \M_n$ is self-adjoint.
Hence by Wittstock's decomposition theorem it may be written as the difference of two CP maps $\phi_i \in \CP(\cl S_{{\rm T}}, \M_n)$,
$\phi \circ \gamma = \phi_1 - \phi_2$ with 
\[ \|\phi_1(I) + \phi_2(I)\| = \| \phi \circ \gamma\|_{cb} \leq \|\gamma\|_{cb} .\]

We have 
\[ I = \phi \circ \gamma (I)= \phi_1(I) - \phi_2(I) \leq \phi_1(I) \leq \phi_1(I) + \phi_2(I) \leq \|\gamma\|_{cb} I .\]
This implies that $\|\phi_2(I)\| \leq \|\gamma \|_{cb} -1$.
Let $P= \phi_1(I)$; so \mbox{$I \leq P \leq \|\gamma\|_{cb} I$}, whence $\|\gamma\|_{cb}^{-1/2} \leq \|P^{-1/2}\| \leq 1$. 
Define a map $\psi: \cl S_{\rm{T}} \to \M_n$  by $\psi(X) = P^{-1/2}\phi_1(X) P^{-1/2}$. 
Then $\psi$ is UCP and hence, 
\[ (B_1,...,B_d) =: (\psi(T_1),..., \psi(T_d)) \in W^n(\rm{T}) .\]

For a real number $x \geq 0$, we have
\[
 1- \frac 1{\sqrt{1+x}} = \frac x {\sqrt{1+x}(\sqrt{1+x} + 1)} \leq \frac x 2 
\]
and therefore
\begin{align*} 
 \|A_i - B_i\| &= \|A_i - P^{-1/2}( \phi \circ \gamma(T_i) + \phi_2(T_i) ) P^{-1/2} \| \\&
 \leq  \| A_i - P^{-1/2} A_i P^{-1/2} \| + \|P^{-1/2} \phi_2(T_i) P^{-1/2}\|  \\&
 \leq 2 \| I - P^{-1/2} \| + \|P^{-1} \| (\|\gamma\|_{cb} -1) \|  \\&
 \leq 2 \Big( 1- \frac{1}{\sqrt{\|\gamma\|_{cb}}} \Big) + (\|\gamma\|_{cb} -1) \leq 2(\| \gamma \|_{cb} -1). 
\end{align*}
Thus, if the cb-norms tend to 1, then the Hausdorff distance tends to 0.

For the converse, we use the elementary fact that if $T_i \in \cl T$ and matrices $A_i \in \M_n, 0 \leq i \leq d$, then
\[
 \big\| \sum_{i=0}^d A_i \otimes T_i \big\| = \sup \big\{ \big\| \sum_i A_i \otimes B_i \big\| : (B_0,\dots, B_d) \in \cl W(\rm T) \big\}.
\]

The map from $\e: \cl S \to \ell^{\infty}_{d+1}$ given by $T_i \to e_i$ is completely bounded.
Thus if $A_i \in \M_n, 0 \leq i \leq d$, then
\[
 \max_{0 \leq i \leq d} \|A_i\| = \big\| \e \big( \sum_{i=0}^d A_i \otimes T_i \big) \big\|  \leq \| \e \|_{cb} \big\| \sum_{i=0}^d A_i \otimes T_i \big\| .
\]
Given any ${\rm B} \in W^n({\rm T}^{k\kmax})$, choose ${\rm C} \in W^n(\rm T)$ such that 
\[ \max_i \|B_i - C_i \| \leq d_H(\cl W({\rm T}), \cl W({\rm T}^{k\kmax})) .  \]
We have that
\begin{align*}
  \| \sum_i A_i \otimes B_i \| &\leq   \| \sum_i A_i \otimes C_i \| + \| \sum_i A_i \otimes (B_i - C_i) \| \\&
  \leq \| \sum_i A_i \otimes T_i \| + (\max_i \|A_i \|) d_H(\cl W({\rm T}), \cl W({\rm T}^{k\kmax})) \\&
  \leq \big(1 + \|\e \|_{cb}\, d_H(\cl W({\rm T}), \cl W({\rm T}^{k\kmax})) \big) \|\sum_i A_i \otimes T_i \|.
\end{align*}
Taking the supremum of the left hand side over all ${\rm B} \in \cl W({\rm T}^{k\kmax})$ and over all $A_i$, yields
\[
 \|\gamma \|_{cb} \leq 1 + \|\e \|_{cb}\, d_H(\cl W \big( {\rm T}), \cl W({\rm T}^{k\kmax}) \big) .
\]
Since $\|\e\|_{cb}$ is independent of $k$, we see that if the Hausdorff distance tends to 0, then the cb-norm tends to 1.

The proof for (2) is similar.
\end{proof}

One implication of the exactness result can be extended to infinite dimensions. For this we shall use the tensor product characterization of exactness from \cite{KPTT}. To prove this it is convenient to first prove a preliminary result whose C*-analogue is well-known.

\begin{prop} Let $\cl S$ be an operator system that has a unital complete order inclusion into a nuclear C*-algebra. Then $\cl S$ is exact.
\end{prop}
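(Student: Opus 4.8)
The plan is to exploit the tensor product characterization of exactness from \cite{KPTT}: an operator system $\cl S$ is exact if and only if for every C*-algebra $\cl A$ and every closed two-sided ideal $\cl J \subseteq \cl A$, the canonical inclusion $\cl S \otimes_{\min} \cl J \hookrightarrow \cl S \otimes_{\min} \cl A$ induces a complete order isomorphism onto its image in $\cl S \otimes_{\min} (\cl A / \cl J)$'s kernel; equivalently, the sequence $0 \to \cl S \otimes_{\min} \cl J \to \cl S \otimes_{\min} \cl A \to \cl S \otimes_{\min} (\cl A/\cl J) \to 0$ is exact at the middle term in the appropriate matricial sense. So fix a nuclear C*-algebra $\cl B$ with a unital complete order inclusion $\cl S \subseteq \cl B$, and fix $(\cl A, \cl J)$ as above.

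First I would use nuclearity of $\cl B$: since $\cl B$ is nuclear, $\cl B \otimes_{\min} (-) = \cl B \otimes_{\max} (-)$, and in particular $\cl B$ is exact as a C*-algebra, so the sequence
\[
0 \to \cl B \otimes_{\min} \cl J \to \cl B \otimes_{\min} \cl A \to \cl B \otimes_{\min} (\cl A/\cl J) \to 0
\]
is exact. (This is the well-known C*-analogue alluded to in the excerpt: nuclear implies exact.) Second, I would invoke injectivity (at the level of min-tensor products) of the inclusion $\cl S \subseteq \cl B$: because $\otimes_{\min}$ is injective for operator systems, the natural maps $\M_p(\cl S \otimes_{\min} \cl C) \to \M_p(\cl B \otimes_{\min} \cl C)$ are complete order embeddings for $\cl C \in \{\cl J, \cl A, \cl A/\cl J\}$. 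The idea is then to place the sequence for $\cl S$ inside the sequence for $\cl B$ via a commuting ladder of complete order embeddings, and to chase the diagram: if an element of $\M_p(\cl S \otimes_{\min} \cl A)$ maps to $0$ in $\M_p(\cl S \otimes_{\min} (\cl A/\cl J))$, then viewing it in $\M_p(\cl B \otimes_{\min} \cl A)$ it maps to $0$ in $\M_p(\cl B \otimes_{\min} (\cl A/\cl J))$, hence by exactness of the $\cl B$-sequence it lies in (the positive cone coming from) $\M_p(\cl B \otimes_{\min} \cl J)$; intersecting back with $\M_p(\cl S \otimes_{\min} \cl A)$ and using that $\cl S \otimes_{\min} \cl J \hookrightarrow \cl S \otimes_{\min} \cl A$ is a complete order embedding (again injectivity of $\min$), one concludes the element already lives, with the correct matrix order, in $\M_p(\cl S \otimes_{\min} \cl J)$. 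This verifies the exactness criterion for $\cl S$.

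I expect the main obstacle to be bookkeeping the positivity conditions rather than just the set-theoretic kernel: exactness for operator systems is a statement about matrix cones (the kernel must be identified as an operator system with its intrinsic order, equivalently the quotient must be a complete order isomorphism), not merely about exactness of the underlying vector-space sequence. So the delicate point is to check that an element of $\M_p(\cl S \otimes \cl A)^+$ which vanishes in the quotient is in fact the image of an element of $\M_p(\cl S \otimes \cl J)^+$ with matching order data, and this is exactly where the two injectivity facts (for $\cl S \hookrightarrow \cl B$ and for $\cl J \hookrightarrow \cl A$ after tensoring with $\cl S$) must be combined with the C*-exactness of $\cl B$. I would also need to be mildly careful that the relevant quotient operator system structure on $\cl A/\cl J$ tensored with $\cl S$ agrees with the one used in the \cite{KPTT} criterion; this is standard but worth a sentence. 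Everything else — commutativity of the ladder, functoriality of $\otimes_{\min}$ — is routine.
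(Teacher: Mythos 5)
There is a genuine gap, in fact two, in the key steps you defer to a diagram chase. First, the ``intersecting back'' step is not justified: from $x \in \M_p(\cl S \otimes_{min} \cl A)$ lying in the kernel $\M_p(\overline{\cl B \otimes \cl J})$ of the $\cl B$-sequence you conclude $x \in \M_p(\overline{\cl S \otimes \cl J})$, but the identity $(\cl S \otimes_{min} \cl A) \cap \overline{\cl B \otimes \cl J} = \overline{\cl S \otimes \cl J}$ is not a consequence of injectivity of the minimal tensor product. It is a slice-map (Fubini-type) property of the inclusions $\cl S \subseteq \cl B$, $\cl J \subseteq \cl A$, and such Fubini statements fail for general inclusions; establishing it (even using nuclearity of $\cl B$) is essentially as hard as the theorem itself and you give no argument for it. Second, the KPTT notion of exactness for operator systems requires the induced map $(\cl S \otimes_{min} \cl A)/(\cl S \,\overline{\otimes}\, \cl J) \to \cl S \otimes_{min} (\cl A/\cl J)$ to be a \emph{complete order isomorphism}, not merely a bijection with the right kernel: a bijective UCP map need not have a CP inverse, so one must show that the quotient matrix cones coincide with the minimal cones. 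Your proposal explicitly identifies this as ``the delicate point'' but then offers no argument for it; the kernel chase you describe does not touch it. So as written the proof does not go through.

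For comparison, the paper sidesteps both difficulties by using a different characterization of exactness, namely \cite{KPTT}*{Theorem~5.7}: $\cl S$ is exact if and only if $\cl S \otimes_{min} \cl T = \cl S \otimes_{el} \cl T$ completely order isomorphically for every operator system $\cl T$. Since the el structure on $\cl S \otimes \cl T$ is by definition the one induced from $I(\cl S) \otimes_{max} \cl T$, one only needs a UCP map $\psi : \cl B \to I(\cl S)$ fixing $\cl S$ (which exists by injectivity of $I(\cl S)$) together with nuclearity of $\cl B$ in the form $\cl B \otimes_{min} \cl T = \cl B \otimes_{max} \cl T$; then
\[
\cl S \otimes_{min} \cl T \subseteq \cl B \otimes_{min} \cl T = \cl B \otimes_{max} \cl T \xrightarrow{\ \psi \otimes \id\ } I(\cl S) \otimes_{max} \cl T
\]
shows the identity from the min to the el structure is UCP, hence min $=$ el and $\cl S$ is exact. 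If you want to pursue your route (heredity of exactness under passing to operator subsystems of a nuclear, hence exact, C*-algebra), you would need to invoke or prove that heredity as a theorem rather than obtain it from a vector-space diagram chase.
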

\begin{proof} Let $\cl A$ be the nuclear C*-algebra that contains $\cl S$.  By the universal properties of the injective envelope there exists a UCP map  $\psi: \cl A \to I(\cl S)$ such that $\psi|_{\cl S}$ is the identity map.

We use the subscript $coi$ to indicate that an inclusion or equality of sets is a unital complete order inclusion.

Given any operator system $\cl T$ we have that
\[
 \cl S \otimes_{min} \cl T \subseteq_{coi} \cl A \otimes_{min} \cl T =_{coi} \cl A \otimes_{max} \cl T 
 \stackrel{\psi \otimes id}{\longrightarrow} I(\cl S) \otimes_{max} \cl T.
\]

Thus, the inclusion  $\cl S \otimes \cl T \subseteq I(\cl S) \otimes_{max} \cl T$ defines the min tensor, but this subspace identification is also how the el-tensor is defined.  

Hence, $\cl S \otimes_{min} \cl T =_{coi} \cl S \otimes_{el} \cl T$ for every $\cl T$ and by \cite{KPTT}*{Theorem~5.7}, $\cl S$ is exact.
\end{proof}

\begin{prop} 
Let $\cl S$ be an operator system, then $\OMIN_k(\cl S)$ is an exact operator system. 
\end{prop}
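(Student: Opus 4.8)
The plan is to realize $\OMIN_k(\cl S)$ concretely as a unital suboperator system of a commutative-by-matrix C*-algebra, and then invoke the previous proposition. Recall from the background section the identification
\[
 \OMIN_k(\cl S) \equiv \{ \widehat{x} : x \in \cl S \} \subseteq C(\Omega_k) \otimes \M_k,
\]
via the map $\Gamma: x \mapsto \widehat{x}$, which is a unital complete order embedding, where $\Omega_k = \UCP(\cl S,\M_k)$ is the (compact, in the weak$*$ topology) $k$-th matrix state space. The key structural observation is that $C(\Omega_k)\otimes \M_k = C(\Omega_k, \M_k)$ is a nuclear C*-algebra: it is the minimal (equivalently maximal) tensor product of the nuclear C*-algebra $C(\Omega_k)$ with the finite-dimensional C*-algebra $\M_k$, and the class of nuclear C*-algebras is closed under such tensor products.

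So the argument is short. First I would recall the embedding $\OMIN_k(\cl S) \subseteq_{coi} C(\Omega_k)\otimes\M_k$ from the background section. Second, I would note that $C(\Omega_k)\otimes\M_k$ is nuclear, since $C(\Omega_k)$ is a commutative unital C*-algebra (hence nuclear) and $\M_k$ is finite-dimensional, and nuclearity passes to the (unique) C*-tensor product with a nuclear factor. Finally, applying the previous proposition — any operator system with a unital complete order inclusion into a nuclear C*-algebra is exact — gives that $\OMIN_k(\cl S)$ is exact.

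I do not anticipate a real obstacle here; the content has essentially been set up already. The one point that deserves a sentence of care is the claim that $C(\Omega_k)\otimes\M_k$ really is a C*-algebra (and not merely an operator system) and that it is nuclear: this is because tensoring a C*-algebra with $\M_k$ is the same as passing to $k\times k$ matrices over it, $M_k(C(\Omega_k)) = C(\Omega_k,\M_k)$, which is a C*-algebra, and nuclearity is preserved under forming matrix algebras (or directly: $\M_k$ is nuclear and $C(\Omega_k)$ is nuclear, and the minimal tensor product of nuclear C*-algebras is nuclear). If one prefers to avoid even citing nuclearity of such tensor products, one can observe that $C(\Omega_k,\M_k)$ is approximately finite-dimensional-by-continuous in the obvious way, but citing the standard permanence properties of nuclear C*-algebras is cleaner. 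With that in hand, the proposition follows immediately from the preceding one.
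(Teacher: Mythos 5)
Your proof is correct and is essentially the paper's own argument: both use the unital complete order embedding $x \mapsto \widehat{x}$ of $\OMIN_k(\cl S)$ into $C(\Omega_k)\otimes\M_k = \M_k(C(\Omega_k))$, note that this C*-algebra is nuclear, and then invoke the preceding proposition that an operator system admitting a unital complete order inclusion into a nuclear C*-algebra is exact.
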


\begin{proof} 
Let $\Omega_k:= \UCP(\cl S, \M_k)$ equipped with the weak*-topology.  Then the map
\[
 x \in \OMIN_k(\cl S) \mapsto \hat{x} \in \M_k(C(\Omega_k)), \text{ where } \hat{x}(\phi) = \phi(x),
\]
is a unital complete order embedding. 
Since $\M_k(C(\Omega_k))$ is a nuclear C*-algebra,  $\OMIN_k(\cl S)$ is an exact operator system by the above result.
\end{proof}

\begin{lemma} 
Let $\gamma: \cl S_1 \to \cl S_2$ be self-adjoint, i.e., $\gamma(x^*) = \gamma(x)^*$, and cb, then for any operator system $\cl T$ we have that
\[
 \| \gamma \otimes \id: \cl S_1 \otimes_{el} \cl T \to \cl S_2 \otimes_{el} \cl T \|_{cb} = \|\gamma \|_{cb}.
\]
\end{lemma}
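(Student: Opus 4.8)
The plan is to establish the two inequalities $\|\gamma \otimes \id\|_{cb} \ge \|\gamma\|_{cb}$ and $\|\gamma \otimes \id\|_{cb} \le \|\gamma\|_{cb}$ separately, the first being soft and the second the substantive one. Throughout I will use two standard facts: that $\cl S_i \otimes_{el} \cl T$ is, by definition, the operator system whose matrix order is inherited from a unital complete order inclusion $\cl S_i \otimes_{el} \cl T \subseteq I(\cl S_i) \otimes_{max} \cl T$; and that the maximal operator system tensor product is functorial for completely positive maps \cite{KPTT}.

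For the lower bound I would fix a state $s$ on $\cl T$ and observe that $\id_{I(\cl S_i)} \otimes s : I(\cl S_i) \otimes_{max} \cl T \to I(\cl S_i)$ is UCP, so its restriction to $\cl S_i \otimes_{el} \cl T$ is a UCP map into $\cl S_i$ which is a left inverse of the embedding $x \mapsto x \otimes 1_{\cl T}$. Hence $x \mapsto x \otimes 1_{\cl T}$ is a unital complete order embedding of $\cl S_i$ into $\cl S_i \otimes_{el} \cl T$ for $i = 1,2$, and since $\gamma \otimes \id$ carries $x \otimes 1_{\cl T}$ to $\gamma(x) \otimes 1_{\cl T}$ it restricts to $\gamma$ under these completely isometric identifications; thus $\|\gamma \otimes \id\|_{cb} \ge \|\gamma\|_{cb}$. (That $\gamma \otimes \id$ is well defined and bounded between the el-tensor products will come out of the next step.)

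For the upper bound I would first regard $\gamma$ as a self-adjoint cb map into the injective operator system $I(\cl S_2) \supseteq \cl S_2$, with $\|\gamma\|_{cb}$ unchanged. By Wittstock's decomposition theorem $\|\gamma\|_{cb}$ equals the decomposition norm of $\gamma$ with codomain $I(\cl S_2)$, so for every $\varepsilon > 0$ I can write $\gamma = \gamma^+ - \gamma^-$ with $\gamma^{\pm} \in \CP(\cl S_1, I(\cl S_2))$ and $\|\gamma^+(1) + \gamma^-(1)\| \le \|\gamma\|_{cb} + \varepsilon$. Using the injectivity of $I(\cl S_2)$, extend each $\gamma^{\pm}$ to a CP map $\widetilde\gamma^{\pm} : I(\cl S_1) \to I(\cl S_2)$. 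Then $\widetilde\gamma^{\pm} \otimes \id : I(\cl S_1) \otimes_{max} \cl T \to I(\cl S_2) \otimes_{max} \cl T$ are CP by functoriality, and restricting their difference to $\cl S_1 \otimes_{el} \cl T \subseteq I(\cl S_1) \otimes_{max} \cl T$ produces $\gamma \otimes \id$; this shows $\gamma \otimes \id$ is bounded, takes values in $\cl S_2 \otimes_{el} \cl T \subseteq I(\cl S_2) \otimes_{max} \cl T$, and — viewed as a map into $I(\cl S_2) \otimes_{max} \cl T$, which has the same cb-norm because that inclusion is completely isometric — is a difference of the two CP maps $(\widetilde\gamma^{\pm} \otimes \id)|_{\cl S_1 \otimes_{el} \cl T}$. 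Consequently
\[
 \|\gamma \otimes \id\|_{cb} \le \|\gamma \otimes \id\|_{dec} \le \big\| \big(\gamma^+(1) + \gamma^-(1)\big) \otimes 1_{\cl T} \big\| = \big\| \gamma^+(1) + \gamma^-(1) \big\| \le \|\gamma\|_{cb} + \varepsilon,
\]
where I used $(\widetilde\gamma^{\pm} \otimes \id)(1) = \gamma^{\pm}(1) \otimes 1_{\cl T}$ and that $\|a \otimes 1_{\cl T}\|_{I(\cl S_2) \otimes_{max} \cl T} = \|a\|$ for a positive element $a$ (the bound $\le$ being $0 \le a \otimes 1 \le \|a\|\,1 \otimes 1$, and $\ge$ coming from $\id \otimes s$). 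Letting $\varepsilon \to 0$ gives $\|\gamma \otimes \id\|_{cb} \le \|\gamma\|_{cb}$, which together with the lower bound proves the lemma.

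The main obstacle is not any single hard estimate but the careful bookkeeping around the el-tensor product: one must consistently use that $\cl S_i \otimes_{el} \cl T$ is a unital complete order subsystem of $I(\cl S_i) \otimes_{max} \cl T$, that CP maps on the max tensor product restrict to CP maps on this subsystem, and that this inclusion is completely isometric so cb-norms can be computed in either ambient space. One should also remember that the infimum defining $\|\cdot\|_{dec}$ need not be attained in infinite dimensions, which is why the argument is run with an arbitrary $\varepsilon > 0$ rather than an exact minimizer.
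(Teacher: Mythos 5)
Your proof is correct and follows essentially the same route as the paper: a Wittstock decomposition of $\gamma$ into CP maps into the injective envelope $I(\cl S_2)$, tensoring the pieces with $\id_{\cl T}$ via functoriality of the max tensor product, and bounding the cb norm by the decomposition norm evaluated at $\gamma^{\pm}(1)\otimes 1_{\cl T}$. The differences are only cosmetic refinements: you extend the CP pieces to $I(\cl S_1)$ to justify complete positivity on $\cl S_1 \otimes_{el} \cl T$ (which the paper leaves implicit), run an $\varepsilon$-argument instead of invoking attainment of the decomposition norm, and spell out the easy lower bound that the paper treats as obvious.
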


\begin{proof}
Regard $\gamma$ as a map into $I(\cl S_2)$, the injective envelope of $\cl S_2$.
Then by Wittstock's decomposition theorem and injectivity of the range, there exist CP maps $\gamma_i: \cl S_1 \to I(\cl S_2)$
such that $\gamma = \gamma_1 - \gamma_2$ and $\| \gamma_1(1) + \gamma_2(1) \| = \| \gamma \|_{cb}$.
Then $\gamma \otimes \id = \gamma_1 \otimes \id - \gamma_2 \otimes \id$, where
$\gamma_i \otimes \id: \cl S_1 \otimes_{el} \cl T \to I(\cl S_2) \otimes_{max} \cl T$ are both CP.
Hence, 
\begin{align*}
 \|\gamma \otimes \id \|_{CB(\cl S_1 \otimes_{el} \cl T, \cl S_2 \otimes_{el} \cl T)} 
 &= \| \gamma \otimes \id \|_{CB(\cl S_1 \otimes_{el} \cl T, I(S_2) \otimes_{max} \cl T)} \\
 &\leq \|(\gamma_1 +\gamma_2)\otimes \id\|_{CB(\cl S_1 \otimes_{el} \cl T, I(\cl S_2) \otimes_{max} \cl T)} \\
 &= \| \gamma_1(1) + \gamma_2(1) \| = \| \gamma \|_{cb} .
 \qedhere
\end{align*} 
\end{proof}

\begin{thm} Let $\cl S$ be an operator system.
If $r_{\infty}(\cl S) =1$, then $\cl S$ is exact. 
\end{thm}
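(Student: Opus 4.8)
The plan is to verify the tensor‑product characterization of exactness from \cite{KPTT}*{Theorem~5.7}: it suffices to prove that $\cl S \otimes_{min} \cl T =_{coi} \cl S \otimes_{el} \cl T$ for every operator system $\cl T$. The identity map $\cl S \otimes_{el} \cl T \to \cl S \otimes_{min} \cl T$ is always UCP, and a unital complete contraction between operator systems is automatically UCP; hence it is enough to establish, for every $\cl T$, the single norm estimate
\[
 \big\| \id : \cl S \otimes_{min} \cl T \to \cl S \otimes_{el} \cl T \big\|_{cb} \leq 1 .
\]

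To prove this, fix $k \in \bb N$ and factor the identity of $\cl S$ through $\OMIN_k(\cl S)$ as $\id_{\cl S} = \id_{\OMIN_k(\cl S)}^{\,\cl S} \circ \id_{\cl S}^{\,\OMIN_k(\cl S)}$. Tensoring with $\id_{\cl T}$ gives a factorization
\[
 \cl S \otimes_{min} \cl T \longrightarrow \OMIN_k(\cl S) \otimes_{min} \cl T =_{coi} \OMIN_k(\cl S) \otimes_{el} \cl T \longrightarrow \cl S \otimes_{el} \cl T ,
\]
where the first arrow is $\id_{\cl S}^{\,\OMIN_k(\cl S)} \otimes \id_{\cl T}$, the last arrow is $\id_{\OMIN_k(\cl S)}^{\,\cl S} \otimes \id_{\cl T}$, and the composition is the identity map of $\cl S \otimes \cl T$. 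The first arrow is UCP because $\id_{\cl S}^{\,\OMIN_k(\cl S)}$ is UCP and $\otimes_{min}$ is functorial for UCP maps. The middle identification is a complete order isomorphism because $\OMIN_k(\cl S)$ is exact (proved above) and hence satisfies \cite{KPTT}*{Theorem~5.7}. The last arrow has cb‑norm equal to $\| \id_{\OMIN_k(\cl S)}^{\,\cl S} \|_{cb} = r_k(\cl S)$, by the preceding Lemma (applied to the self‑adjoint cb map $\gamma = \id_{\OMIN_k(\cl S)}^{\,\cl S}$) together with Proposition~\ref{prop:rkdkViaId}. Composing the three maps yields $\big\| \id : \cl S \otimes_{min} \cl T \to \cl S \otimes_{el} \cl T \big\|_{cb} \leq r_k(\cl S)$.

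Since this holds for every $k$ and the sequence $(r_k(\cl S))_k$ is nonincreasing with limit $r_\infty(\cl S) = 1$, we obtain $\big\| \id : \cl S \otimes_{min} \cl T \to \cl S \otimes_{el} \cl T \big\|_{cb} \leq 1$. Being unital, this identity map is therefore UCP, so $\cl S \otimes_{min} \cl T =_{coi} \cl S \otimes_{el} \cl T$; as $\cl T$ was arbitrary, $\cl S$ is exact. This is essentially a diagram chase, and the only steps needing care are the functoriality of $\otimes_{min}$ under UCP maps and the fact that $\id : \cl S \otimes_{el} \cl T \to \cl S \otimes_{min} \cl T$ is UCP, both of which are standard in the operator system tensor product theory of \cite{KPTT}; so I do not anticipate a genuine obstacle.
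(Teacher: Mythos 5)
Your proposal is correct and follows essentially the same route as the paper: factoring $\id : \cl S \otimes_{min} \cl T \to \cl S \otimes_{el} \cl T$ through $\OMIN_k(\cl S) \otimes_{min} \cl T =_{coi} \OMIN_k(\cl S) \otimes_{el} \cl T$ (using exactness of $\OMIN_k(\cl S)$), bounding the cb-norm by $r_k(\cl S)$ via the lemma on the el-tensor and Proposition~\ref{prop:rkdkViaId}, letting $k \to \infty$, and invoking \cite{KPTT}*{Theorem~5.7} together with the fact that a unital complete contraction is UCP. This is exactly the paper's commutative-diagram argument, so no further comment is needed.
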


\begin{proof}
Let $\alpha_k=\id: \cl S \to \OMIN_k(\cl S)$ be the identity map from $\cl S$ to $\OMIN_k(\cl S)$ and note that from the universal property (see the beginning of section \ref{sec: background}) of $\OMIN_k(\cl S)$, $\alpha_k$ is a unital completely positive map.  
Let also $\beta_k = \id_{\OMIN_k(\cl S)}^{\ \cl S} = \id : \OMIN_k(\cl S) \to \cl S$. Thus we have the following commutative diagram:
\[
 \begin{CD} \cl S \otimes_{min} \cl T & @>>> & \cl S \otimes_{el} \cl T \\
 @V\alpha_k\otimes \id VV & & @A\beta_k\otimes \id AA \\
 \OMIN_k(\cl S) \otimes_{min} \cl T  & @= & \OMIN_k(\cl S) \otimes_{el} \cl T.
\end{CD}
\] 
Since by the lemma,  $\| \beta_k \otimes \id \|_{cb} = \| \beta_k \|_{cb} \to 1$, we have that the top arrow is unital and completely contractive and hence UCP.
So $\cl S$ is exact by \cite{KPTT}*{Theorem~5.7}.
\end{proof}

We give an example of an exact infinite dimensional operator system for which this limit is not 1. In fact, our example is a C*-algebra.

\begin{example}
Let $\cl A = \cl K(\ell^2(\bb N)) + \bb C I_{\ell^2(\bb N)}$ denote the unitized compact operators.  This C*-algebra is nuclear and hence exact.

Let $P_n$ denote the projection onto the first $n$ coordinates, let $\phi_n: \cl A \to \M_n$ by given by $\phi_n(X) = P_nXP_n$ denote the compression to the first $n \times n$ block, and let $\psi_n: \M_n \to \cl A$ be the map given by
\[
 \psi_n(Y) = Y \oplus tr_n(Y) (I-P_n).
\]  
Then both of these maps are UCP and for every $X \in \cl A$ we have that
\[
 \|X - \psi_n \circ \phi_n(X) \| \to 0.
\] 
In fact, this is a way that one can directly prove that $\cl A$ is a nuclear operator system.

Next we show that $\psi_{n,k} = \id_{\cl A}^{\,\OMIN_k(\cl A)}\circ \psi_n \circ \id_{\OMIN_k(\M_n)}^{\ \M_n}$ is UCP.  
Let  $(Y_{i,j}) \in \M_p(\OMIN_k(\M_n))^+$ and let $\delta: \cl A \to \M_k$ be UCP. Then
\[
 ( \delta \circ \psi_{n,k}(Y_{i,j})) = (\delta(Y_{i,j} \oplus 0)) + (\delta( 0 \oplus tr_n(Y_{i,j}) I)).
\]
If we define $\delta_0: \M_n \to \M_k$ via $\delta_0(Y) = \delta(Y \oplus 0)$, then this is a CP map into $\M_k$ 
and so the image of an element of $\M_p(\OMIN_k(\M_n))^+$ will be positive. 
Similarly, $\delta_1: \M_n \to \M_k$ defined by $\delta_1(Y) = \delta(0 \oplus tr_n(Y)(I-P_n)) I_k$ is CP into $\M_k$ 
and so $(\delta_1(Y_{i,j})) \in \M_p(\M_k)^+$.
Thus, the image under $\delta$ of each positive element of $\M_p(\OMIN_k(\M_n))^+$ is positive in $\M_p(\M_k)$. 
Since this is true for every CP map into $\M_k$, it follows from \cite{JoKrPaPe}*{Theorem~5} that 
$(\psi_{n,k}(Y_{i,j})) \in \M_p(\OMIN_k(\cl A))^+$; whence $\psi_{n,k}$ is CP.

Let $\beta_k = \id_{\OMIN_k(\cl A)}^{\ \cl A}$ denote the identity map of $\OMIN_k(\cl A)$ into $\cl A$.  
Let $\gamma_{n,k} = \id_{\OMIN_k(\M_n)}^{\,\M_n}$ denote the identity map of $\OMIN_k(\M_n)$ into $\M_n$. 
Note that $\gamma_{n,k} = \phi_n \circ \beta_k \circ \psi_{n,k}$.
Hence, for all $n \geq k$,
\[
 \frac{2n-k}{k} = \|\gamma_{n,k} \|_{cb} \leq \|\phi_n\|_{cb} \| \beta_k \|_{cb} \|\psi_{n,k} \|_{cb} = \|\beta_k \|_{cb},
\]
from which it follows that $ \|\id_{\OMIN_k(\cl A)}^{\ \cl A} \|_{cb} = \|\beta_k \|_{cb} = + \infty$.
Thus, $\cl A$ is exact but 
\[ r_{\infty}(\cl A) :=
 \lim_{k\to\infty} \|\id_{\OMIN_k(\cl A)}^{\ \cl A} \|_{cb} = + \infty .
\] 
\end{example}

\begin{remark} There are many finite dimensional operator systems that are known to be not exact or not have the lifting property.  The values of the parameters $d_{\infty}(\cl S)$ and $r_{\infty}(\cl S)$ are not known for most such examples. Even in the case that $\cl S$ is a concrete operator subsystem of a matrix algebra. For example, if $\bb F_n$ denotes the free group on $n$ generators, then the $2n+1$ dimensional operator system $\cl S_n \subseteq C^*(\bb F_n)$ spanned by the generators of the free group, their adjoints, and the identity, has the lifting property but is not exact. Thus, $r_{\infty}(\cl S_n) \ne 1$ but its exact value is unknown as are the values of $r_k(\cl S_n)$. Similarly, while $d_{\infty}(\cl S_n) =1$, values of $d_k(\cl S_n)$ are not known. 

   The 5 dimensional operator subsystem of $\M_4$ given by
\[ \cl S:= \operatorname{span} \{ I_4, E_{1,2}, E_{3,4}, E_{2,1}, E_{4,3} \},\]
is known to fail to have the lifting property \cite{KPTT}, but the values of $d_k(\cl S)$ and $d_{\infty}(\cl S)$ are not known.
\end{remark}

\section{Acknowledgements}

We thank the referee for helpful comments, especially bringing \cite{RS89} to our attention.

GA was supported in part by ANR (France) under the grant
ESQuisses (ANR-20-CE47-0014-01). AMH acknowledges funding from The Research Council
of Norway (project 324944). MR is supported by the Marie Sk\l{}odowska-Curie Fellowship from the European Union’s Horizon research and innovation programme, grant Agreement No.\ HORIZON-MSCA-2022-PF-01 (Project number: 101108117).

 \appendix

 \section{Partially entanglement breaking maps}

 We will use some results about \emph{$k$-partially entanglement breaking} ($k$-PEB) maps. 
Recall that a map $\psi: \M_n \to \M_m$ is called $k$-PEB provided $\psi \circ \phi$ is CP for every $k$-positive map $\phi: \M_d \to \M_n$, with $d$ arbitrary. These maps are also called \emph{$k$-superpositive} at various places in the literature (see \cite{super-positive} and the references therein).

\begin{prop} Let $\psi: \M_n \to \M_m$ be $k$-PEB. 
If $\cl T$ is any operator system and $\phi: \cl T \to \M_n$ is $k$-positive, then $\psi \circ \phi$ is completely positive.
\end{prop}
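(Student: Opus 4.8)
The plan is to verify, straight from the definitions, that for every $p\geq 1$ and every $Y\in\M_p(\cl T)^+$ the matrix $(\id_p\otimes(\psi\circ\phi))(Y)$ is positive semidefinite. The crux is the following claim: $\rho:=(\id_p\otimes\phi)(Y)\in\M_p(\M_n)$ is \emph{$k$-block positive}, i.e.\ $\langle\xi,\rho\,\xi\rangle\geq 0$ for every $\xi\in\C^p\otimes\C^n$ of Schmidt rank at most $k$; equivalently $\rho\in\M_p(\OMIN_k(\M_n))^+$, so that $\phi$ is completely positive as a map from $\cl T$ into $\OMIN_k(\M_n)$. Note that the universal property of $\OMIN_k$ recalled in Section~\ref{sec: background} only yields this for \emph{unital} $k$-positive maps, whereas $\phi$ need not be unital, so a short argument is needed.

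To prove the claim I would compress to a subspace of rank $k$. Given $\xi\in\C^p\otimes\C^n$ of Schmidt rank $\leq k$, its Schmidt decomposition (padding if necessary) writes $\xi=(I_p\otimes V)\eta$ for an isometry $V:\C^k\to\C^n$ and some $\eta\in\C^p\otimes\C^k$, whence
\[
 \langle\xi,\rho\,\xi\rangle=\big\langle\eta,(\id_p\otimes(\Ad_V\circ\phi))(Y)\,\eta\big\rangle .
\]
So it suffices to show $\Ad_V\circ\phi:\cl T\to\M_k$ is completely positive for every such $V$. But $\Ad_V\circ\phi$ is $k$-positive: for $Z\in\M_k(\cl T)^+$,
\[
 (\id_k\otimes(\Ad_V\circ\phi))(Z)=(I_k\otimes V)^*\,(\id_k\otimes\phi)(Z)\,(I_k\otimes V)\ \geq\ 0,
\]
using that $\phi$ is $k$-positive. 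Since a $k$-positive map whose range is $\M_k$ is automatically completely positive (\cite{Pa2002}*{Theorems~3.14 and~6.1}, as used after \eqref{eq:off-the-shelf-domain}), $\Ad_V\circ\phi$ is CP, and the claim follows.

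With the claim in hand the conclusion is immediate. The $k$-block positive matrix $\rho$ is the Choi matrix $\alpha(\sigma)=(\id_p\otimes\sigma)(\ketbra{\chi}{\chi})$ of a $k$-positive map $\sigma:\M_p\to\M_n$; since $\psi$ is $k$-PEB, $\psi\circ\sigma:\M_p\to\M_m$ is completely positive, so its Choi matrix $\alpha(\psi\circ\sigma)=(\id_p\otimes\psi)(\rho)$ is positive semidefinite. Since $(\id_p\otimes\psi)(\rho)=(\id_p\otimes(\psi\circ\phi))(Y)$ and $p$ and $Y\in\M_p(\cl T)^+$ were arbitrary, $\psi\circ\phi$ is completely positive. (This last step can also be phrased via Proposition~\ref{k-PEBprop}, which identifies the $k$-PEB maps $\M_n\to\M_m$ with the CP maps $\OMIN_k(\M_n)\to\M_m$: then $\psi\circ\phi$ is the composition of the two CP maps $\cl T\to\OMIN_k(\M_n)\to\M_m$.)

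I expect the only genuine obstacle to be the non-unitality point flagged in the first paragraph: one cannot simply invoke the universal property of $\OMIN_k$, and must instead carry out the rank-$k$ compression and appeal to the classical fact that $k$-positive maps into $\M_k$ are completely positive. Everything else is routine bookkeeping.
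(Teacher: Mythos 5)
Your proof is correct, but it reaches the conclusion by a different mechanism than the paper. The paper's proof takes $X=(x_{i,j})\in\M_d(\cl T)^+$ and uses the duality between positive elements of $\M_d(\cl T)$ and completely positive maps $\M_d\to\cl T$: it produces a CP map $\gamma:\M_d\to\cl T$ with $\gamma(E_{i,j})=x_{i,j}$, notes that $\phi\circ\gamma:\M_d\to\M_n$ is $k$-positive, applies the $k$-PEB hypothesis to this map between matrix algebras, and evaluates at the positive matrix $(E_{i,j})$. You instead push $Y$ forward, prove by the rank-$k$ compression argument (together with the fact that $k$-positive maps into $\M_k$ are CP) that $\rho=(\id_p\otimes\phi)(Y)$ is $k$-block positive, and then feed the $k$-positive map $\sigma:\M_p\to\M_n$ with Choi matrix $\rho$ into the $k$-PEB hypothesis. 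In fact the two constructions coincide: your $\sigma$ is exactly the paper's $\phi\circ\gamma$, since $\phi\circ\gamma$ has Choi matrix $(\phi(y_{i,j}))=\rho$; the difference is that the paper verifies $k$-positivity of this map in one line (CP composed with $k$-positive), while you verify it through the block-positivity criterion, which costs you the Schmidt-decomposition compression and the appeal to \cite{Pa2002}*{Theorem 6.1}. What your route buys is a transparent, quantum-information-flavored reduction that also explains why unitality is never needed; what the paper's route buys is brevity, since it bypasses $\OMIN_k$ cones and block positivity entirely. Two small caveats: your isometry $V:\C^k\to\C^n$ requires $k\leq n$, and for $k>n$ you should note the statement is trivial ($\phi$ is then CP by \cite{Pa2002}*{Theorem 6.1}, and $\psi$ is CP since $\id_{\M_n}$ is admissible in the definition of $k$-PEB); and your parenthetical appeal to Proposition~\ref{k-PEBprop} should not replace the explicit Choi-matrix step, because in the paper the implication (3)$\Rightarrow$(1) of that proposition is deduced from the present proposition, so quoting it here would be circular, whereas your explicit argument is precisely what makes the step self-contained.
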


\begin{proof} 
Let $X = (x_{i,j}) \in \M_d(\cl T)^+ = (M_d \otimes \cl T)^+$ for $d\geq 1$. 
Then there is a completely positive map $\gamma: \M_d \to \cl T$ with $\gamma(E_{i,j}) = x_{i,j}$, 
where $E_{i,j}$ denotes the standard matrix unit basis for $\M_d$.  
Indeed, $\gamma(A) = V^*(A \otimes X) V$ where $V = \sum_{i=1}^d E_{i1} \otimes (E_{i1} \otimes 1_{\cl T})$.
Then the map $\phi \circ \gamma: \M_d \to \M_n$ is $k$-positive and hence $\psi \circ \phi \circ \gamma$ is completely positive.

Let $E = (E_{i,j})) \in \M_d(\M_d)^+$.
We see that $(\psi \circ \phi)(X) = ( \psi \circ \phi \circ \gamma)(E)$ is positive. 
Since $d$ is arbitrary, this shows that $\psi \circ \phi$ is completely positive.
\end{proof}


 Another way of stating these results is given by the following proposition:

 \begin{prop} \label{k-PEBprop} 
Given a map $\psi: \M_n \to \M_m$, the following are equivalent:
\begin{enumerate}
 \item $\psi:\OMIN_k(\M_n) \to \M_m$ is CP,
 \item $\psi: \M_n \to \OMAX_k(\M_m)$ is CP,
 \item $\psi$ is $k$-PEB.
\end{enumerate}
\end{prop}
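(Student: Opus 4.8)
The plan is to establish the three equivalences by combining the universal properties of $\OMIN_k$ and $\OMAX_k$ with the two preceding propositions on $k$-PEB maps. First I would show $(1) \Leftrightarrow (3)$. If $\psi : \OMIN_k(\M_n) \to \M_m$ is CP and $\phi : \M_d \to \M_n$ is $k$-positive, then by property~(2) of $\OMIN_k$ (or rather property~(3): $\id_{\M_n}^{\OMIN_k(\M_n)} \circ \phi$ is CP since $\phi$ is $k$-positive), the composition $\psi \circ \id_{\M_n}^{\OMIN_k(\M_n)} \circ \phi = \psi \circ \phi$ is CP, so $\psi$ is $k$-PEB. Conversely, if $\psi$ is $k$-PEB, apply the first proposition of this appendix with $\cl T = \OMIN_k(\M_n)$ and $\phi = \id_{\OMIN_k(\M_n)}^{\M_n}$, which is $k$-positive by property~(2) of $\OMIN_k$; hence $\psi \circ \id_{\OMIN_k(\M_n)}^{\M_n} = \psi : \OMIN_k(\M_n) \to \M_m$ is CP.

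Next I would handle $(2) \Leftrightarrow (3)$ in an entirely parallel fashion using the dual universal properties of $\OMAX_k$. If $\psi : \M_n \to \OMAX_k(\M_m)$ is CP, then for any $k$-positive $\phi : \M_d \to \M_n$ we have $\id_{\OMAX_k(\M_m)}^{\M_m} \circ \psi \circ \phi$; but $\psi \circ \phi : \M_d \to \OMAX_k(\M_m)$ is CP (composition of CP with $k$-positive landing in $\OMAX_k$ — here one should check the composition is still CP into $\OMAX_k$, which follows because $\phi$ being $k$-positive means $\id_{\M_n}^{\OMIN_k(\M_n)}\circ\phi$ is CP, and one can route through the identification in Proposition~\ref{k-PEBprop} already being built — or more directly, observe $\psi\circ\phi$ CP into $\M_m$ suffices once we know $\M_n(\OMAX_k)^+\subseteq\M_n(\M_m)^+$ goes the right way; the cleanest route is to compose with $\id_{\OMAX_k(\M_m)}^{\M_m}$). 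For the converse, if $\psi$ is $k$-PEB, factor $\psi : \M_n \to \OMAX_k(\M_m)$ as $\psi \circ \id_{\OMAX_k(\M_n)}^{\M_n}$ is not quite what we want; instead use property~(4) of $\OMAX_k$: $\psi : \M_n \to \OMAX_k(\M_m)$ is CP iff for every operator system $\cl U$ and every CP map... — more simply, CP into $\OMAX_k(\M_m)$ is equivalent to $k$-positivity composed appropriately. The tidiest argument is: $\psi : \M_n \to \OMAX_k(\M_m)$ is CP iff $(\id_d \otimes \psi)$ maps $\M_d(\M_n)^+$ into $\M_d(\OMAX_k(\M_m))^+$, and by the $SN(\rho)\le k$ description this is exactly the $k$-PEB / $k$-superpositive condition restated.

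The main obstacle I anticipate is getting the bookkeeping of the universal properties exactly right in the $\OMAX_k$ direction, since $\OMAX_k$ is defined by a property for maps \emph{out of} it rather than into it, so showing a map \emph{into} $\OMAX_k(\M_m)$ is CP is less immediate than the $\OMIN_k$ case. I would resolve this either by appealing directly to the explicit cone description $\M_d(\OMAX_k(\M_m))^+ = \{\rho : SN(\rho) \le k\}$ cited in the Background section (which makes CP-ness into $\OMAX_k(\M_m)$ a concrete Schmidt-number condition that matches the definition of $k$-superpositivity), or by using the duality between $\OMIN_k$ and $\OMAX_k$ cones together with the fact that $k$-PEB is self-dual in the appropriate sense. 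Everything else is a routine diagram chase through properties (1)--(4) of the Background section and the two propositions already proved in this appendix.

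\begin{proof}
$(3) \Rightarrow (1)$: Suppose $\psi$ is $k$-PEB. The map $\id_{\OMIN_k(\M_n)}^{\M_n} : \OMIN_k(\M_n) \to \M_n$ is $k$-positive by property~(2) of Section~\ref{sec: background}. By the first proposition of this appendix (applied with $\cl T = \OMIN_k(\M_n)$), the composition $\psi \circ \id_{\OMIN_k(\M_n)}^{\M_n} : \OMIN_k(\M_n) \to \M_m$, which is just $\psi$ viewed on $\OMIN_k(\M_n)$, is completely positive.

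$(1) \Rightarrow (3)$: Suppose $\psi : \OMIN_k(\M_n) \to \M_m$ is CP, and let $\phi : \M_d \to \M_n$ be $k$-positive for arbitrary $d$. By property~(3) of $\OMIN_k(\M_n)$, $\id_{\M_n}^{\OMIN_k(\M_n)} \circ \phi : \M_d \to \OMIN_k(\M_n)$ is CP. Hence $\psi \circ \phi = \psi \circ \big( \id_{\M_n}^{\OMIN_k(\M_n)} \circ \phi \big)$ is a composition of CP maps, so it is CP. Thus $\psi$ is $k$-PEB.

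$(2) \Leftrightarrow (3)$: By the description of the cones recalled in Section~\ref{sec: background}, for each $d$ the cone $\M_d(\OMAX_k(\M_m))^+$ consists exactly of those $\rho \in \M_d(\M_m)^+$ with Schmidt number at most $k$. By definition, $\psi : \M_n \to \OMAX_k(\M_m)$ is CP if and only if $(\id_d \otimes \psi)$ carries $\M_d(\M_n)^+$ into $\M_d(\OMAX_k(\M_m))^+$ for every $d$. Equivalently, for every $d$ and every positive $\rho \in \M_d(\M_n)^+$, the image $(\id_d \otimes \psi)(\rho)$ is positive with Schmidt number at most $k$; this is precisely the statement that $\psi$ is $k$-superpositive, i.e.\ $k$-PEB. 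Hence $(2) \Leftrightarrow (3)$.
\end{proof}
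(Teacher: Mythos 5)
Your proof of the equivalence of (1) and (3) is exactly the paper's argument: for (3)$\Rightarrow$(1) you apply the first proposition of the appendix with $\cl T = \OMIN_k(\M_n)$ and $\phi = \id_{\OMIN_k(\M_n)}^{\ \M_n}$, and for (1)$\Rightarrow$(3) you use the universal property of $\OMIN_k$ to see that a $k$-positive map into $\M_n$ is CP into $\OMIN_k(\M_n)$; this matches the paper line for line (the paper, like you, applies the universal property to maps that need not be unital, which is harmless).

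The equivalence of (2) and (3) is where there is a genuine gap. You correctly quote the cone description $\M_d(\OMAX_k(\M_m))^+ = \{\rho : SN(\rho) \leq k\}$ from Section~\ref{sec: background}, so that (2) becomes: $(\id_d \otimes \psi)$ sends every $\rho \in \M_d(\M_n)^+$ to a positive matrix of Schmidt number at most $k$. But you then assert that this ``is precisely the statement that $\psi$ is $k$-superpositive, i.e.\ $k$-PEB.'' In this paper $k$-PEB is \emph{defined} by the composition property ($\psi \circ \phi$ is CP for every $k$-positive $\phi : \M_d \to \M_n$), not by a Schmidt-number condition on outputs. The equivalence of these two conditions is exactly the duality between the Schmidt-number-at-most-$k$ cone and the cone of $k$-block positive matrices (equivalently, the Choi/Kraus-rank characterization of $k$-superpositive maps), which is the substantive content of (2)$\Leftrightarrow$(3); declaring it to be a matter of definition makes your argument circular relative to the paper's definitions. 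To close the gap you should either cite that known equivalence explicitly (it is in \cite{super-positive} and \cite{JoKrPaPe}), or give an argument parallel to your (1)$\Leftrightarrow$(3) step, e.g.\ pass to the adjoint $\psi^*$ for the trace pairing, use the duality identifying CP maps $\M_n \to \OMAX_k(\M_m)$ with CP maps $\OMIN_k(\M_m) \to \M_n$ together with the fact that $\psi$ is $k$-PEB if and only if $\psi^*$ is, and then invoke the already-proved equivalence of (1) and (3) for $\psi^*$; but in any such route the duality input must be stated and justified rather than absorbed into the terminology. (The paper itself only says this half is ``similar,'' but it does not license treating the Schmidt-number reformulation of $k$-PEB as the definition.)
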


\begin{proof} 
Assume (1). If $\phi: \M_d \to \M_n$ is $k$-positive, then $\phi: \M_d \to \OMIN_k(\M_n)$ is CP and hence $\psi \circ \phi$ is CP.  
Therefore, $\psi$ is $k$-PEB.
 
Conversely, if $\psi$ is $k$-PEB, then since $\phi=\id: \OMIN_k(\M_n) \to \M_n$ is $k$-positive, we have that
$\psi \circ \phi: \OMIN_k(\M_n) \to \M_n$ is CP. 
Since $\psi \circ \phi(X) = \psi(X)$ we have (1).
Thus, (1) and (3) are equivalent.

The equivalence of (2) and (3) is similar.
\end{proof}


\end{document}